\newcommand\e{\varepsilon}
\newtheoremstyle{mythm}{1.5ex plus 1ex minus .2ex}{1.5ex plus 1ex
minus .2ex}{\kai}{\parindent}{\song\bfseries}{}{1em}{}
\numberwithin{equation}{section}
\newtheorem{theorem}{Theorem}[section]
\newtheorem{lemma}{Lemma}[section]
\newtheorem{corollary}{Corollary}[section]
\begin{document}
\title{{\textbf{Uniqueness of topological solutions of  self-dual Chern-Simons equation with collapsing vortices}}}

\author{Genggeng Huang\footnote{genggenghuang@sjtu.edu.cn}\quad and Chang-Shou Lin\footnote{cslin@math.ntu.edu.tw}}

\date{}
\maketitle
\begin{center}{\footnotesize{$^*$ Department of Mathematics, INS and MOE-LSC, Shanghai Jiao Tong University, Shanghai, China}}\\
{\footnotesize{$^\dag$  Taida Institute for Mathematical Sciences, Center for Advanced Study in Theoretical Sciences,\\
National Taiwan University, Taipei, 10617, Taiwan}}\end{center}
\begin{abstract}
We consider the following Chern-Simons equation,
\begin{equation}
\label{0.1}
\Delta u+\frac 1{\varepsilon^2} e^u(1-e^u)=4\pi\sum_{i=1}^N \delta_{p_i^\varepsilon},\quad \text{in}\quad \Omega,
\end{equation}
where $\Omega$ is a 2-dimensional flat torus, $\varepsilon>0$ is a coupling parameter and $\delta_p$ stands for the Dirac measure concentrated at $p$. In this paper, we proved that  the topological solutions of \eqref{0.1} are uniquely determined by the location of their vortices provided the coupling parameter $\varepsilon$ is small and the collapsing velocity of vortices $p_i^\varepsilon$ is slow enough or fast enough comparing with $\varepsilon$. This extends the uniqueness results of Choe \cite{Choe2005} and Tarantello \cite{Tarantello2007}. Meanwhile, for any topological solution $\psi$ defined in $\mathbb R^2$ whose linearized operator is non-degenerate, we construct a sequence topological solutions $u_\varepsilon$ of \eqref{0.1} whose asymptotic limit is exactly $\psi$ after rescaling around $0$. A consequence  is  that non-uniqueness of topological solutions in $\mathbb R^2$ implies non-uniqueness of topological solutions on torus with collapsing vortices.
\end{abstract}

\section{Introduction}
This paper is devoted to study the following semi-linear elliptic equation with exponential nonlinearity,
\begin{equation}
\label{1.1}
\Delta u+\frac 1{\varepsilon^2} e^u(1-e^u)=4\pi\sum_{i=1}^N \delta_{p_i^\varepsilon},\quad \text{in}\quad \Omega,
\end{equation}
where $\Omega$ is a 2-dimensional flat torus, $\varepsilon>0$ is a coupling parameter and $\delta_p$ stands for the Dirac measure concentrated at $p$. Through out the paper, we always normalize the volume of $\Omega$ as $|\Omega|=1$.
\par  \eqref{1.1} arises in the Abelian Chern-Simons model introduced by Jackiw-Weinberg \cite{JackiwWeinberg1990} and Hong-Kim-Pac \cite{HongKimPac1990}. This model is given in the $(2+1)-$dimensional Minkowski space with metric $g_{\mu\nu}=diag(1,-1,-1)$. The Chern-Simons model in \cite{HongKimPac1990}  and \cite{JackiwWeinberg1990} can be formulated according to the Lagrangian density,
\begin{equation}
\label{1.2}
\mathcal L=\frac \kappa4\varepsilon^{\mu\nu\rho}F_{\mu\nu}A_\rho+D_\mu\phi\overline{D^\mu \phi}-\frac 1{\kappa^2}|\phi|^2(1-|\phi|^2)^2
\end{equation}
where $A_\mu$ $(\mu=0,1,2)$ is a real gauge field on $\mathbb R^{3}$, $\phi$ is the complex-valued Higgs field, $F_{\mu\nu}=\partial_\mu A_\nu-\partial_\nu A_\mu$ is the curvature tensor, $D_\mu=\partial_\mu-\sqrt{-1}A_\mu$ is the gauge covariant derivative, $\varepsilon^{\mu\nu\rho}$ is totally skew symmetric tensor with $\varepsilon^{012}=1$, and $\kappa>0$ is the Chern-Simons coupling constant.  When the energy for a pair $(\phi,A)$ is saturated, as in \cite{JackiwWeinberg1990} and \cite{HongKimPac1990}, one can get the following Bogomol'nyi type equation.
\begin{equation}
\label{1.3}
\begin{cases}
(D_1+iD_2)\phi=0,\\
F_{12}+\frac 2\kappa |\phi|^2(|\phi|^2-1)=0.
\end{cases}
\end{equation}
As in Jaffe-Taubes \cite{JaffeTaubes1980}, we let $u=\ln |\phi|^2$, and denote the zeros of $\phi$ by $\{p_1^\varepsilon,\cdots,p_N^\varepsilon\}$,   \eqref{1.3} can be transformed to \eqref{1.1} with $\varepsilon=\frac \kappa2$, if we impose the periodic boundary condition(introduced by 't Hooft \cite{Hooft1979}). For the details of derivation of \eqref{1.1} and related models, we refer the readers to Hong-Kim-Pac \cite{HongKimPac1990}, Jackiw-Weinberg \cite{JackiwWeinberg1990}, Dunne \cite{Dunne1995}, Tarantello \cite{Tarantello2008} and  Yang \cite{Yang2001}.
 \par A sequence of  solutions  $u_\varepsilon$ of \eqref{1.1} are called  topological type if
 \begin{equation*}
 u_\varepsilon(x)\rightarrow 0, \quad \text{a.e. in }\Omega, \quad \text{ as } \varepsilon\rightarrow 0,
 \end{equation*}
 and called non-topological type if
 \begin{equation*}
 u_\varepsilon(x)\rightarrow -\infty, \quad \text{a.e. in }\Omega, \quad \text{ as }\varepsilon\rightarrow 0.
 \end{equation*}
 When the vortices don't change with $\varepsilon$, we rewrite \eqref{1.1} as
 \begin{equation}
 \label{1.4}
 \Delta u+\frac 1{\varepsilon^2} e^u(1-e^u)=4\pi\sum_{i=1}^N \delta_{p_i},\quad \text{in}\quad \Omega.
 \end{equation}
\par The construction of topological vortex condensate $u(x)$ of \eqref{1.4} was first done by Caffarelli-Yang \cite{CaffarelliYang1995} via both monotone scheme and variational method. Then Tarantello \cite{Tarantello1996} further exploited the variational structure and got both topological (for general $N$) and non-topological vortex condensates(for $N=1$). After that, many papers were devoted to find the non-topological vortex condensate for $N\geq 2$. For these developments, we refer readers to \cite{NolascoTarantello1998,NolascoTarantello1999,DingJostLiWang1998,DingJostLiWang1999,linYan2010,linYan2013,Choe2009,ChoeKim2008} and references therein. All these works reveal that the non-topological solution isn't unique. As for the topological solutions, Choe \cite{Choe2005} and Tarantello \cite{Tarantello2007} independently proved that the topological solution of \eqref{1.4}  is unique when the coupling constant $\varepsilon>0$ is small enough. Their results can be summarized as follows.
 \\
 {\bf Theorem A.}  {\it There is a critical value of $\varepsilon$, say, $\varepsilon(p_1,\cdots,p_N)>0$ such that, for $0<\varepsilon<\varepsilon(p_1,\cdots,p_N)$,  \eqref{1.4} admits a unique topological solution.
}
\par By Theorem A, we shall see that the critical value $\varepsilon(p_1,\cdots,p_N)$ doesn't only depend on the vortex number $N$ but also on the location of the vortices on the flat torus $\Omega$. For the physical applications, it is relevant to know to what extend the uniqueness property stated above depends on the smallness of the parameter $\varepsilon$, and hence on the location of the vortex points. In the end of the paper of Tarantello \cite{Tarantello2007}, the author considered a generalization of Theorem A, where the vortex points are allowed to vary with $\varepsilon$ but no collapsing of vortices happens.  So a natural question is whether the critical value $\varepsilon(p_1,\cdots,p_N)$ in Theorem A depends only on the vortex number $N$. We give a partial answer to this problem in this paper. First, we classify the points $\{p_1^\varepsilon,\cdots,p_N^\varepsilon\}$ according to their asymptotic behavior. Along a subsequence, we define
\begin{equation}
\label{class1}
A_{k,\varepsilon}=\{p_{i}^{\varepsilon}|\lim_{\varepsilon\rightarrow 0}\frac{|p_{i}^{\varepsilon}-p_{k}^{\varepsilon}|}{\varepsilon}<+\infty\}.
\end{equation}
It follows directly from the definition of $A_{k,\varepsilon}$ that for any $k,m$, either $A_{k,\varepsilon}=A_{m,\varepsilon}$ or $A_{k,\varepsilon}\cap A_{m,\varepsilon}=\emptyset$. Without loss of generality, we can take $A_{i,\varepsilon},$ $i=1,\cdots,l$ such that
\begin{equation}
\label{class2}
A_{i,\varepsilon}\cap A_{j,\varepsilon}=\emptyset,\quad i\neq j,\quad \cup_{i=1}^lA_{i,\varepsilon}=\{p_{1}^{\varepsilon},\cdots,p_{N}^{\varepsilon}\}.
\end{equation}

\begin{theorem}
\label{thm1.2}
There exists $C(N)>0$ such that if for any set $A_{i,\varepsilon}$, $i=1,\cdots,l$ defined in \eqref{class1}, \eqref{class2}, either
\begin{equation}
 \label{1.5}
 \begin{split}
 &(a.)\quad \lim_{\varepsilon\rightarrow 0}\frac{|p^\varepsilon_k-p^\varepsilon_m|}{\varepsilon}\geq C(N),\quad \text{for all different points}\quad p_{k}^{\varepsilon},p_{m}^{\varepsilon}\in A_{i,\varepsilon},\quad \text{or}\\
 &(b.)\quad \lim_{\varepsilon\rightarrow 0}\frac{|p^\varepsilon_k-p^\varepsilon_m|}{\varepsilon}\leq \frac1{C(N)},\quad \text{for all different points}\quad p_{k}^{\varepsilon},p_{m}^{\varepsilon}\in A_{i,\varepsilon}
 \end{split}
 \end{equation}
 hold. Then equation \eqref{1.1} has a unique topological solution for all small $\varepsilon>0$.
\end{theorem}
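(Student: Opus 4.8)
To prove Theorem \ref{thm1.2} the plan is to argue by contradiction through the linearized equation. Suppose that along some sequence $\varepsilon\to0$ the equation \eqref{1.1} has two distinct topological solutions $u_\varepsilon^{(1)}$ and $u_\varepsilon^{(2)}$ with the same vortices $\{p_1^\varepsilon,\dots,p_N^\varepsilon\}$, and set $\xi_\varepsilon=u_\varepsilon^{(1)}-u_\varepsilon^{(2)}$. Since the two solutions carry the \emph{same} logarithmic singular part at every $p_i^\varepsilon$, the function $\xi_\varepsilon$ is smooth across the vortices and solves $\Delta\xi_\varepsilon+\frac1{\varepsilon^2}c_\varepsilon\,\xi_\varepsilon=0$ on $\Omega$, where $c_\varepsilon(x)=\int_0^1 f'\big(tu_\varepsilon^{(1)}+(1-t)u_\varepsilon^{(2)}\big)\,\mathrm dt$, $f(s)=e^s(1-e^s)$, $f'(s)=e^s(1-2e^s)$, so that $c_\varepsilon\to f'(0)=-1$ on every compact set avoiding the limiting vortex configuration. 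Normalizing $\|\xi_\varepsilon\|_{L^\infty(\Omega)}=1$, the whole point is to reach a contradiction; this will force $\xi_\varepsilon\equiv0$, i.e. uniqueness.

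The first step is to localize the norm of $\xi_\varepsilon$ near the vortices. At a positive interior maximum of $\xi_\varepsilon$ one has $\Delta\xi_\varepsilon\le0$, hence $c_\varepsilon\ge0$; since $f'\ge0$ only when $e^s\le\frac12$, such a point satisfies $\min\big(u_\varepsilon^{(1)},u_\varepsilon^{(2)}\big)\le-\log2$, and by the a priori analysis of topological solutions (which gives $u_\varepsilon^{(j)}\to0$ uniformly away from the vortices and, on the scale $\varepsilon$ about each cluster, convergence to an $\mathbb R^2$ profile bounded away from $-\log2$ outside a bounded set) this can only occur within an $O(\varepsilon)$-neighbourhood of $\{p_i^\varepsilon\}$; the negative minimum is treated identically. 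Hence $\|\xi_\varepsilon\|_{L^\infty}$ is attained at some $x_\varepsilon$ within $O(\varepsilon)$ of a cluster $A_{i^*,\varepsilon}$. Now rescale: with $p^\varepsilon\in A_{i^*,\varepsilon}$, $y=(x-p^\varepsilon)/\varepsilon$, $\tilde\xi_\varepsilon(y)=\xi_\varepsilon(p^\varepsilon+\varepsilon y)$, the functions $\tilde u_\varepsilon^{(j)}(y)=u_\varepsilon^{(j)}(p^\varepsilon+\varepsilon y)$ solve $\Delta\tilde u^{(j)}+f(\tilde u^{(j)})=4\pi\sum_{p_k^\varepsilon\in A_{i^*,\varepsilon}}\delta_{(p_k^\varepsilon-p^\varepsilon)/\varepsilon}$, and since $u_\varepsilon^{(j)}\le0$ and $f$ is bounded the a priori analysis yields $\tilde u_\varepsilon^{(j)}\to U$ (locally uniformly away from the rescaled vortices, matching logarithmic singularities, no loss of mass), where $U$ is a topological solution on $\mathbb R^2$ of $\Delta U+e^U(1-e^U)=4\pi\sum_k m_k\delta_{r_k}$: in case (a) of \eqref{1.5} the $r_k$ are $n_{i^*}=|A_{i^*,\varepsilon}|$ distinct points with mutual distances $\ge C(N)$, while in case (b) they lie in a ball of radius $\le1/C(N)$ with $\sum_k m_k=n_{i^*}$. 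Then $c_\varepsilon(p^\varepsilon+\varepsilon\,\cdot)\to f'(U)$ and, since $\|\tilde\xi_\varepsilon\|_{L^\infty}=1$ with $f'(U)$ bounded, elliptic estimates give $\tilde\xi_\varepsilon\to Z$ in $C^1_{\mathrm{loc}}(\mathbb R^2)$ with $\Delta Z+f'(U)Z=0$ and $\|Z\|_{L^\infty}\le1$; as $(x_\varepsilon-p^\varepsilon)/\varepsilon$ stays bounded and realizes the norm, $|Z(y_0)|=1$ at its limit $y_0$, so $Z\not\equiv0$. (Were the maximizing point to escape to a scale $\gg\varepsilon$ from the cluster, recentring there would give $\Delta Z-Z=0$ on $\mathbb R^2$ with the norm still attained, forcing $Z\equiv0$ by Liouville — impossible; this confirms the localization.) Everything therefore reduces to $Z\equiv0$, i.e. to the non-degeneracy of $L_U=\Delta+f'(U)$ on $L^\infty(\mathbb R^2)$.

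The heart of the proof, and its main obstacle, is choosing $C(N)$ so that $L_U$ is non-degenerate in both regimes. The input is the non-degeneracy of the radial topological solution $U_m$ of $\Delta U+e^U(1-e^U)=4\pi m\delta_0$ on $\mathbb R^2$ for each $m\le N$ — namely that $L_{U_m}Z=0$ has no nonzero $L^\infty(\mathbb R^2)$ solution, the translation modes $\partial_1U_m,\partial_2U_m$ being singular at the origin and hence not globally bounded — which I take as known (classical for $m=1$, and in general established beforehand). For case (a) I would argue by contradiction with a compactness argument: if no uniform $C(N)$ worked there would be configurations with minimal separation $\to\infty$, solutions $U^{(n)}$, and nonzero $Z^{(n)}\in L^\infty$ with norm $1$; the sign of $f'$ together with a maximum-principle estimate on exterior domains localizes the extremum within $O(1)$ of one simple vortex, and recentring there gives $U^{(n)}\to U_1$ and $Z^{(n)}\to Z\neq0$ bounded with $L_{U_1}Z=0$, contradicting non-degeneracy of $U_1$. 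For case (b) the parallel argument with configurations of total multiplicity $m\le N$ and diameter $\to0$ forces $U^{(n)}\to U_m$ and a nonzero bounded solution of $L_{U_m}Z=0$ — equivalently, non-degeneracy being an open condition, it persists for all configurations within distance $1/C(N)$ of the point mass once $C(N)$ is large. Choosing $C(N)$ beyond both thresholds (over all multiplicities $\le N$) gives $Z\equiv0$ in every case, contradicting $|Z(y_0)|=1$; hence $\xi_\varepsilon\equiv0$ for small $\varepsilon$. The points needing the most care are the uniform a priori description of $u_\varepsilon^{(j)}$ near a collapsing cluster (so that the rescaled limit is genuinely a topological $\mathbb R^2$ solution with the stated vortex data and without loss of mass) and the two compactness arguments above, which rest on knowing that tightly clustered — resp.\ widely separated — topological solutions on $\mathbb R^2$ converge to $U_m$ — resp.\ to a copy of $U_1$ near each vortex.
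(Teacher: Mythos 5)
Your blow-up scheme for uniqueness is essentially the paper's: normalize the difference of two solutions, locate the point where the sup is attained, rescale by $\varepsilon$, and pass to a limiting linearized equation $\Delta Z+e^{U}(1-2e^{U})Z=0$ on $\mathbb R^2$ (your maximum-principle localization of the extremum replaces the paper's Case (I), where the far-field limit $\Delta\hat\phi=\hat\phi$ is excluded directly; both work). However, there are genuine gaps. First, Theorem \ref{thm1.2} asserts existence as well as uniqueness, and your proposal never addresses it. This is not free here: because the vortices collapse, one needs a sub/supersolution threshold independent of the configuration, which is exactly the content of Theorem \ref{thm2.1} ($\varepsilon(N)\geq N^{-cN}$ uniformly in $\{p_1,\cdots,p_N\}$), combined with Lemma \ref{prop2.1} to see that the maximal solution is topological.

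Second, and more seriously, your endgame diverges from the paper in a way that leaves two holes. (i) You write $\tilde u^{(j)}_\varepsilon\to U$ for \emph{both} solutions with the same limit $U$. That is precisely where the \emph{uniqueness} half of Theorem B enters (via Lemma \ref{lem2.4}): if the limiting problem \eqref{1.6} admitted two topological solutions, the two rescaled sequences could converge to different limits, the intermediate function $\tilde u_\varepsilon$ in the coefficient $c_\varepsilon$ would not converge to a solution, and the limiting equation would not be the linearization at any $U$ --- indeed Theorem \ref{thm1.3} shows the scheme genuinely fails then. Non-degeneracy of the linearized operators alone does not give you this identification of the limit. (ii) You replace the citation of Theorem B's estimate \eqref{1.9} by a compactness reduction to the radial solutions $U_m$, whose non-degeneracy \emph{in $L^\infty(\mathbb R^2)$} you ``take as known''; for $m\geq 2$ this is not classical --- it is itself a special case of Choe's theorem --- and your two compactness arguments (separated vortices converge to copies of $U_1$, clustered ones to $U_m$, with no loss of mass) are in substance a re-proof of Theorem B, only sketched. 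The paper instead takes $C(N)$ to be exactly the constant of Theorem B and uses \eqref{1.7}--\eqref{1.9} as a black box. Relatedly, Theorem B gives invertibility on $H^2(\mathbb R^2)$, not triviality of the bounded kernel; the paper therefore proves explicitly, by the cutoff/integration-by-parts argument at the end of Section 3, that the limit $\hat\phi$ lies in $H^2(\mathbb R^2)$ before applying \eqref{1.9}. Your appeal to $L^\infty$-non-degeneracy is a strictly stronger, unproven statement (it can be repaired by showing bounded kernel elements decay exponentially, since the potential tends to $-1$ at infinity, but that step must be made).
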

The constant $C(N)$ in Theorem \ref{thm1.2} is determined by the following theorem which is due to Choe \cite{Choe2005}.
\\
 {\bf Theorem B.} \cite{Choe2005} {\it For any $N>0$, there exists $C(N)>0$, such that $\sum_{j=1}^l\alpha_j\leq N$ and given $\{p_1\cdots,p_l\}$, either
 \begin{equation}
 \label{1.7}
 \begin{split}
 &(a.)\quad |p_i-p_j|\geq C(N),\quad\forall i\neq j,\quad \text{or}\\
 &(b.)\quad |p_i-p_j|\leq \frac 1{C(N)},\quad\forall i\neq j.
 \end{split}
 \end{equation}
 Then the equation
 \begin{equation}
 \label{1.6}
 \Delta u+e^u(1-e^u)=4\pi\sum_{i=1}^l\alpha_i \delta_{p_i},\quad \text{in}\quad \mathbb R^2,
 \end{equation}
 has a unique topological solution. Moreover, the linearized operator
 \begin{equation}
 \label{1.8}
 Lh=\Delta h+e^u(1-2e^u)h
 \end{equation}
 is an isomorphism from $H^2(\mathbb R^2)$ to $L^2(\mathbb R^2)$ satisfying
 \begin{equation}
 \label{1.9}
 \|Lh\|_{L^2(\mathbb R^2)}\geq C\|h\|_{H^2(\mathbb R^2)}, \quad \forall h\in H^2(\mathbb R^2)
 \end{equation}
 for some constant $C>0$.
}
\par Theorem B tells us that the restriction \eqref{1.5} in Theorem 1.1 is natural as we don't have the uniqueness of topological multivortex solutions of \eqref{1.6} in $\mathbb R^2$. In the following, we will show, in some sense, the uniqueness of topological solutions of \eqref{1.1} is ``equivalent" to the uniqueness of topological solutions of \eqref{1.6}.
\begin{theorem}
\label{thm1.3}
Suppose $\psi $ is a topological solution of \eqref{1.6} with its linearized operator $L_\psi$ satisfying \eqref{1.9}. Then there exists a topological solution $u_\varepsilon$ solves
\begin{equation}
\label{1.13}
\Delta u_\varepsilon+\frac 1{\varepsilon^2} e^{u_\varepsilon}(1-e^{u_\varepsilon})=4\pi\sum_{i=1}^l\alpha_i \delta_{\varepsilon p_i},\quad \text{in }\Omega,
\end{equation}
and $\hat u_\varepsilon(x)=u_\varepsilon(\varepsilon x)$ such that
\begin{equation}
\|\hat u_\varepsilon-\psi\|_{L^\infty(B_{d/\varepsilon}(0))}\rightarrow 0, \quad \text{for some constant } d>0 \text{ small}.
\end{equation}
\end{theorem}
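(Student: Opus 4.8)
The plan is to construct $u_\varepsilon$ by a Lyapunov–Schmidt-type gluing argument, using $\psi$ (rescaled) as an approximate solution on a ball and the known Green's function behavior on $\Omega$ away from the concentration point. First I would set up the ansatz $\hat u_\varepsilon(x)=u_\varepsilon(\varepsilon x)$, which transforms \eqref{1.13} into
\begin{equation*}
\Delta \hat u_\varepsilon + e^{\hat u_\varepsilon}(1-e^{\hat u_\varepsilon})=4\pi\sum_{i=1}^l \alpha_i\delta_{p_i}\quad\text{in }\Omega_\varepsilon:=\varepsilon^{-1}\Omega,
\end{equation*}
so that near $0$ the equation is asymptotically the $\mathbb R^2$ equation \eqref{1.6}, while the torus constraint becomes a condition at the boundary of the large domain $\Omega_\varepsilon$. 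The approximate solution is $W_\varepsilon=\psi$ modified by a correction $\chi_\varepsilon \phi_\varepsilon$ that repairs the discrepancy between the decay of $\psi$ at infinity (where $\psi\to 0$ like $-$(const)$\log|x|$-type corrections governed by the total flux $4\pi\sum\alpha_i$) and the actual far-field profile forced by periodicity; concretely $\phi_\varepsilon$ is built from the regular part of the torus Green's function $G_\Omega$ evaluated near the vortices, pulled back to scale $\varepsilon$. Here $\chi_\varepsilon$ is a cutoff supported in $B_{d/\varepsilon}(0)$.

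Next I would estimate the error $E_\varepsilon:=\Delta W_\varepsilon+e^{W_\varepsilon}(1-e^{W_\varepsilon})-4\pi\sum\alpha_i\delta_{p_i}$ in a suitable weighted $L^2$ (or $L^p$) norm on $\Omega_\varepsilon$, showing $\|E_\varepsilon\|\to 0$; the main contributions come from the cutoff region and from the mismatch of the linear asymptotics, and these are $o(1)$ because $\psi$ converges to its topological limit $1$ (i.e.\ $\psi\to0$) exponentially once one is past the logarithmic transition, and $d$ is chosen small so that $G_\Omega$'s regular part is nearly constant on $B_d(\varepsilon p_i)$. Then I would invert the linearized operator: write $\hat u_\varepsilon=W_\varepsilon+\phi$, project the equation onto $\ker$ and its complement, and use the nondegeneracy hypothesis \eqref{1.9} — that $L_\psi$ is an isomorphism $H^2(\mathbb R^2)\to L^2(\mathbb R^2)$ with the quantitative bound — to solve the infinite-dimensional part by a contraction mapping argument in a small ball of the weighted space, after checking that the linearized operator around $W_\varepsilon$ on $\Omega_\varepsilon$ is a small perturbation of $L_\psi$ (the perturbation being controlled by $\|W_\varepsilon-\psi\|$ and by the fact that the domain $\Omega_\varepsilon$ exhausts $\mathbb R^2$, so spectral obstructions from the boundary are pushed to infinity). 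Because there is no kernel here by assumption, there is no finite-dimensional reduction to carry out, which simplifies matters considerably.

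The hardest part will be handling the two distinct length scales cleanly: the solution lives at scale $\varepsilon$ near the vortices but must match the periodic structure at scale $1$, so the weighted norms must be chosen to see exponential decay of $\psi$ at infinity while still controlling the $\log$-type tail and the cutoff error, and one must verify that the linearized operator on the finite but large domain $\Omega_\varepsilon$ (with periodic boundary conditions, not the whole plane) inherits the coercivity \eqref{1.9} up to $o(1)$. This typically requires a compactness/contradiction argument: if coercivity failed along a sequence, rescale and extract a limit that would be a nontrivial kernel element of $L_\psi$ in $H^2(\mathbb R^2)$, contradicting nondegeneracy — one must be careful that the limiting profile does not escape to infinity, which is where the decay weights and the exponential convergence $\psi\to0$ are essential. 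Once coercivity and the error estimate are in hand, the contraction mapping gives $\phi$ with $\|\phi\|=o(1)$ in a norm controlling $L^\infty(B_{d/\varepsilon}(0))$, and translating back ($u_\varepsilon(x)=\hat u_\varepsilon(x/\varepsilon)$) yields the stated convergence $\|\hat u_\varepsilon-\psi\|_{L^\infty(B_{d/\varepsilon}(0))}\to 0$. Finally I would check that $u_\varepsilon$ is genuinely of topological type, i.e.\ $u_\varepsilon\to 0$ a.e.\ in $\Omega$, which follows from the far-field analysis since away from the vortices $\hat u_\varepsilon$ is close to the harmonic-type profile that tends to $0$ after undoing the scaling.
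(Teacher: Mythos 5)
Your proposal follows essentially the same route as the paper: take the rescaled solution $\psi(x/\varepsilon)$ cut off near the origin as an approximate solution, show the error is small, prove uniform coercivity of the linearized operator by a compactness/contradiction argument that would otherwise produce a nontrivial kernel element of $L_\psi$ contradicting \eqref{1.9}, and close with a contraction mapping (with no finite-dimensional reduction, since there is no kernel). The only substantive difference is that your proposed far-field correction built from the regular part of the torus Green's function is unnecessary: a topological solution $\psi$ decays exponentially (by Han's estimate, used as \eqref{to1} in the paper), so the bare cutoff ansatz $\eta\psi_\varepsilon$ already produces an error of order $e^{-c/\varepsilon}$ with no logarithmic tail to repair.
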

A direct consequence of Theorem \ref{thm1.3} is the following corollary.
\begin{corollary}
Suppose, for some configuration $\{p_1,\cdots,p_l\}$, there exist two different topological solutions  $u_1,u_2$ of \eqref{1.6} with their linearized operators $L_{u_1},L_{u_2}$ satisfying \eqref{1.9}. Then  \eqref{1.13} possesses at least two topological solutions for small $\varepsilon$.
\end{corollary}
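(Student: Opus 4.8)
The plan is to read the Corollary off Theorem \ref{thm1.3}; the only point requiring attention is that the two topological solutions it produces are genuinely distinct, the distinction being witnessed on a ball that does not shrink as $\varepsilon \to 0$. Concretely, I would apply Theorem \ref{thm1.3} twice, first with $\psi = u_1$ and then with $\psi = u_2$ — both choices being admissible since $L_{u_1}$ and $L_{u_2}$ satisfy \eqref{1.9} by hypothesis. This yields, for all sufficiently small $\varepsilon > 0$, topological solutions $u_\varepsilon^{(1)}$ and $u_\varepsilon^{(2)}$ of \eqref{1.13} and constants $d_1, d_2 > 0$ such that the rescalings $\hat u_\varepsilon^{(j)}(x) := u_\varepsilon^{(j)}(\varepsilon x)$ obey
\[
\bigl\|\hat u_\varepsilon^{(1)} - u_1\bigr\|_{L^\infty(B_{d_1/\varepsilon}(0))} \to 0, \qquad \bigl\|\hat u_\varepsilon^{(2)} - u_2\bigr\|_{L^\infty(B_{d_2/\varepsilon}(0))} \to 0 \quad \text{as } \varepsilon \to 0.
\]

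Next I would quantify the difference between the two limits. Since $u_1$ and $u_2$ are distinct solutions of \eqref{1.6} with the same singular set $\{p_1, \dots, p_l\}$, both are smooth on $\mathbb R^2 \setminus \{p_1, \dots, p_l\}$ and cannot coincide there identically, so there is a point $y_* \in \mathbb R^2 \setminus \{p_1, \dots, p_l\}$ with $|u_1(y_*) - u_2(y_*)| =: 2c_0 > 0$ and, by continuity, a radius $r_0 > 0$ with $\overline{B_{r_0}(y_*)} \cap \{p_1, \dots, p_l\} = \emptyset$ and $|u_1 - u_2| \ge c_0$ on $B_{r_0}(y_*)$. Choosing $\varepsilon$ small enough that $B_{r_0}(y_*) \subset B_{d_1/\varepsilon}(0) \cap B_{d_2/\varepsilon}(0)$ and $\|\hat u_\varepsilon^{(j)} - u_j\|_{L^\infty(B_{r_0}(y_*))} < c_0/4$ for $j = 1, 2$, the triangle inequality gives
\[
\bigl\|\hat u_\varepsilon^{(1)} - \hat u_\varepsilon^{(2)}\bigr\|_{L^\infty(B_{r_0}(y_*))} \ge c_0 - \frac{c_0}{4} - \frac{c_0}{4} = \frac{c_0}{2} > 0,
\]
hence $\hat u_\varepsilon^{(1)} \not\equiv \hat u_\varepsilon^{(2)}$ and therefore $u_\varepsilon^{(1)} \not\equiv u_\varepsilon^{(2)}$. (Equivalently: if $u_\varepsilon^{(1)} \equiv u_\varepsilon^{(2)}$ held, then $\hat u_\varepsilon^{(1)} \equiv \hat u_\varepsilon^{(2)}$, and letting $\varepsilon \to 0$ on $B_{r_0}(y_*)$ would force $u_1 \equiv u_2$, a contradiction.) Thus \eqref{1.13} admits at least two distinct topological solutions for all small $\varepsilon$.

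I do not expect any real obstacle in this argument: all the difficulty has been absorbed into Theorem \ref{thm1.3} — the singular-perturbation construction on the dilated torus $\varepsilon^{-1}\Omega$, the transfer of the non-degeneracy \eqref{1.9} into a uniform invertibility of the linearized operator on that torus, and the contraction argument that closes it. For the Corollary itself, the only thing to watch is precisely the point handled above, namely keeping the ball $B_{r_0}(y_*)$ on which $u_1$ and $u_2$ are separated fixed and, for small $\varepsilon$, inside the common region $B_{d_1/\varepsilon}(0) \cap B_{d_2/\varepsilon}(0)$ where both convergences are available.
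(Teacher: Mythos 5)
Your proposal is correct and is exactly the argument the paper intends: the corollary is stated as a direct consequence of Theorem \ref{thm1.3}, obtained by applying that theorem once with $\psi=u_1$ and once with $\psi=u_2$ and noting that the uniform convergence of the rescaled solutions on balls $B_{d/\varepsilon}(0)$ forces the two resulting families to be distinct for small $\varepsilon$. Your explicit separation argument on a fixed ball $B_{r_0}(y_*)$ away from the vortices is a careful filling-in of the detail the paper leaves implicit.
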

First, we sketch our proof for Theorem \ref{thm1.2} and Theorem \ref{thm1.3}. Theorem \ref{thm1.2} contains both existence and uniqueness. The existence part will follow from that there exists $\varepsilon_N>0$ depending only on $N$ such that if $0<\varepsilon<\varepsilon_N$, \eqref{1.1} admits a maximal solution. We will revisit the construction of subsolutions in \cite{CaffarelliYang1995}. The uniqueness part of Theorem \ref{thm1.2} is proved by contradiction. Suppose that there exist two sequences of distinct topological solutions $u_{1,\varepsilon}$, $u_{2,\varepsilon}$ of \eqref{1.1}. Then there exists $x_\varepsilon\in\Omega$ such that \begin{equation*}
|u_{1,\varepsilon}(x_\varepsilon)-u_{2,\varepsilon}(x_\varepsilon)|=|u_{1,\varepsilon}-u_{2,\varepsilon}|_{L^\infty(\Omega)}\neq 0,
\end{equation*}
and $x_\varepsilon\rightarrow p$ as $\varepsilon\rightarrow 0$(up to a subsequence). Set
\begin{equation}\label{1.14}
A_\varepsilon=\frac{u_{1,\varepsilon}-u_{2,\varepsilon}}{|u_{1,\varepsilon}-u_{2,\varepsilon}|_{L^\infty(\Omega)}}.
\end{equation}
Then $A_\varepsilon$ satisfies
\begin{equation}
\label{1.10}
\Delta A_\varepsilon+\frac{1}{\varepsilon^2}e^{\tilde u_\varepsilon}(1-2e^{\tilde u_\varepsilon})A_\varepsilon=0,\quad \text{in}\quad \Omega
\end{equation}
where $\tilde u_\varepsilon$ is between $u_{1,\varepsilon}$ and $u_{2,\varepsilon}$. After a suitable scaling at $x_\varepsilon$, \eqref{1.10} converges to a bounded solution $A$ of
\begin{equation}
\label{1.11}
\Delta A+e^{U}(1-2e^{U})A=0,\quad \text{in}\quad \mathbb R^2,
\end{equation}
where $U$ is a topological solution of
\begin{equation}
\label{1.12}
\Delta U+e^{U}(1-e^{U})=4\pi\sum_{i=1}^l \delta_{q_i},\quad \text{in}\quad \mathbb R^2.
\end{equation}
Here $l$ and vortices $\{q_1,\cdots, q_l\}$ are determined by  the rescaling region  and the collapsing velocity of the vortices compared with the coupling constant $\varepsilon$. Then we can apply Theorem B to get contradictions whenever $\{q_1,\cdots,q_l\}$ satisfies the assumption \eqref{1.7}.
\par The proof of Theorem \ref{thm1.3} depends on the perturbation method which follows from Choe \cite{Choe2005}.  We consider topological solutions of \eqref{1.13} as a perturbation of $\psi_\varepsilon(x)=\psi(\frac x\varepsilon)$. Set $u_\varepsilon(x)=\eta(x)\psi_\varepsilon(x)+\varepsilon^3v_\varepsilon(x)$ and define an operator $G_\varepsilon(v): H^2(\Omega)\rightarrow H^2(\Omega)$. We will prove $G_\varepsilon$ is a well-defined contraction mapping in some suitable space $\mathcal B$.
\par This paper is organized as follows. In Section 2, we will collect some known results and establish some preliminary estimates for the topological solutions which are important to show the convergence of \eqref{1.10} to \eqref{1.11}. In Section 3, we will prove Theorem \ref{thm1.2} i.e. the existence and uniqueness of the topological solution. Section 4 is devoted to the construction of topological solutions which locally converge to a specified topological solution in $\mathbb R^2$.
\section{Preliminaries}
Recall  Green function $G(x,y)$ on $\Omega$,
\begin{equation}
\label{G1}
-\Delta_x G(x,y)=\delta_y-1,\quad x,y\in \Omega, \quad \int_\Omega G(x,y)dx=0.
\end{equation}
We list some properties of $G(x,y)$ as follows:
\begin{itemize}
\item[(a.)] $\forall \phi\in C^2(\Omega)$, $\phi(x)=\int_\Omega \phi(y)dy-\int_\Omega G(x,y)\Delta \phi(y)dy$.
\item[(b.)] $G(x,y)\in C^\infty(\Omega\times\Omega\backslash\{x=y\})$, $G(x,y)=-\frac{1}{2\pi}\ln|x-y|+\gamma(x,y)$ where $\gamma(x,y)$ is the regular part of $G(x,y)$.
\item[(c.)] $|G(x,y)|\leq C(1+|ln|x-y||)$, and $G(x,y)\geq -C_0$, $\forall x,y\in \Omega$ and some constants $C,C_0>0$.
\end{itemize}
We refer the proof of the above properties to \cite{Aubin1982}.
Also remind some facts due to Chan-Fu-Lin \cite{ChanFuLin2002}. Consider
\begin{equation}
\label{CFL1}
\Delta u+ e^u(1-e^u)=0,\quad \text{in}\quad \mathbb R^2,\quad \int_{\mathbb R^2} e^u(1-e^u)dx<\infty.
\end{equation}
Applying the method of moving plane as in Chen-Li \cite{ChenLi1991}, after a translation, all the solutions of \eqref{CFL1} are radially symmetric. Then consider
\begin{equation}
\begin{cases}
\label{CFL2}
u''(r;s)+\frac 1r u'(r;s)+e^{u(r;s)}(1-e^{u(r;s)})=0,\quad r>0,\\
u(0;s)=s,\quad \beta(s)=\int_{\mathbb R^2} e^{u(r;s)}(1-e^{u(r;s)})dx.
\end{cases}
\end{equation}
By \cite{ChanFuLin2002}, $\beta(s)$ is a well-defined continuously differentiable function on $(-\infty,0)$. $\beta(s)$ is monotone increasing such that $\beta(s)\rightarrow +\infty$ as $s\rightarrow 0$, $\beta(s)\rightarrow 8\pi$ as $s\rightarrow -\infty$.
\par  Before the proof of the existence  and uniqueness of topological solutions of  \eqref{1.1}, we need to show the uniform lower bound of the coupling parameter $\varepsilon$ which guarantees the existence of solutions of  \eqref{1.1}. In \cite{CaffarelliYang1995}, the authors  proved the following theorem.

\smallskip
\noindent
{\bf Theorem C.} \cite{CaffarelliYang1995} {\it There is a critical value of $\varepsilon$, say, $\varepsilon(p_1,\cdots,p_N)>0$, satisfying,
\begin{equation*}
\varepsilon(p_1,\cdots,p_N)\leq \frac{1}{\sqrt{16\pi N}}
\end{equation*}
such that, for $0<\varepsilon\leq\varepsilon(p_1,\cdots,p_N)$, \eqref{1.4} has a solution, while for $\varepsilon>\varepsilon(p_1,\cdots,p_N)$, the equation has no solution.
}
\par The proof of Theorem C is based on the construction of sub- and supersolutions. Since the existence of supersolutions always holds true, the delicate part is the construction of subsolution(Lemma 3 in \cite{CaffarelliYang1995}).  However, if we revisit the construction of subsulotions as in Lemma 3 in \cite{CaffarelliYang1995}, we will find $\varepsilon(p_1,\cdots,p_N)$ is independent of the location of vortex.  Although, the construction is similar, we present it here for the  convenience of readers.
\begin{theorem}
\label{thm2.1}
There exists $\varepsilon(N)\geq N^{-cN}$  for some constant $c>0$, such that,  for any configuration $\{p_1,\cdots,p_N\}$,   \eqref{1.1} has a maximal solution provided $\varepsilon<\varepsilon(N)$.
\end{theorem}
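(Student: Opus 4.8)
The plan is to run the sub- and super-solution method of Caffarelli--Yang \cite{CaffarelliYang1995}, the only new point being that the threshold on $\varepsilon$ can be read off to depend on $N$ alone. The supersolution costs nothing: $\bar u\equiv 0$ satisfies $\Delta\bar u+\varepsilon^{-2}e^{\bar u}(1-e^{\bar u})=0\le 4\pi\sum_i\delta_{p_i}$, and at an interior maximum $x_0$ of any solution $u$ --- which is attained away from the $p_i$ --- one has $0\ge\Delta u(x_0)=-\varepsilon^{-2}e^{u(x_0)}(1-e^{u(x_0)})$, hence $e^{u(x_0)}\le 1$, so every solution of \eqref{1.1} lies below $0$. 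Consequently, once a subsolution $\underline u\le 0$ is available for a given $\varepsilon$, the monotone iteration started at $\bar u=0$ decreases, stays above $\underline u$, and converges to a solution, which is then the maximal one; conversely any solution is itself such a subsolution. Thus Theorem \ref{thm2.1} is equivalent to the statement that there is $\varepsilon(N)\ge N^{-cN}$ so that for every configuration $\{p_1,\dots,p_N\}$ and every $\varepsilon<\varepsilon(N)$ a subsolution $\underline u\le 0$ exists, which I would prove by revisiting the construction in Lemma~3 of \cite{CaffarelliYang1995} while keeping track of all the constants.

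For well-separated vortices this is essentially the Caffarelli--Yang construction. Near each $p_i$, in a small punctured disk, one takes the exact Euclidean profile $2\ln|x-p_i|-c_i$ (with $c_i\ge 0$): its distributional Laplacian is exactly $4\pi\delta_{p_i}$, and since the profile is $\le 0$ there the remaining term $\varepsilon^{-2}e^{\underline u}(1-e^{\underline u})\ge 0$, so the subsolution inequality holds in that disk with no constraint on $\varepsilon$. Away from the vortices one interpolates down to a negative floor, doing the matching with $C^1$ regularity so that no concave corner appears, since a concave corner would contribute a negative singular measure to $\Delta\underline u$ and destroy the inequality. The only genuine restriction on $\varepsilon$ then comes from the transition and bulk regions, where the positive term $\varepsilon^{-2}e^{\underline u}(1-e^{\underline u})$ must dominate $-\Delta\underline u$ together with the bounded error terms; since all the relevant heights, radii and error terms depend on $N$ alone, one gets a location-free lower bound for $\varepsilon(N)$ in this case (a negative power of $N$, consistent with the bound in Theorem~C).

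The actual work, and the value $N^{-cN}$, come from the collapsing case, in which the disks around individual vortices cannot be kept disjoint. There one groups the vortices into clusters, hierarchically, replacing the individual logarithms by $2\sum_{i\in S}\ln|x-p_i|-c_S$ around each cluster $S$ --- whose Laplacian is again exactly $4\pi\sum_{i\in S}\delta_{p_i}$ --- and builds $\underline u$ scale by scale, with a floor at each scale. Because there are only $N$ vortices, the number of scales in the hierarchy and the sizes of all the constants $c_i$, $c_S$ and of the floors are bounded in terms of $N$ alone; carrying those bounds through the hierarchy (there are $O(N)$ scales, each contributing a polynomial loss) produces a threshold $\varepsilon(N)\ge N^{-cN}$. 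The main obstacle is exactly this step: arranging the clustering and the $C^1$ interpolations so that the subsolution inequality holds as distributions on all of $\Omega$, uniformly over every configuration --- in particular so that no corner of the hierarchy creates a negative singular measure --- while every length scale and additive constant remains controlled by $N$. Once $\underline u\le 0$ is in hand, the monotone scheme of the first paragraph yields the maximal solution with no further dependence on the location of the vortices, completing the proof.
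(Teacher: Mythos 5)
Your first paragraph (supersolution $\bar u\equiv 0$, monotone iteration, reduction of the whole theorem to producing a subsolution $\underline u\le 0$ with a threshold on $\varepsilon$ depending on $N$ alone) is correct and is exactly the reduction the paper makes. The gap is in the construction of the subsolution itself: for the collapsing case you propose a hierarchical clustering of the vortices over $O(N)$ scales with $C^1$ matching at each scale, and you explicitly identify carrying this out uniformly over all configurations as ``the main obstacle'' without resolving it. As written, the proposal therefore does not prove the theorem precisely at the point where the theorem differs from Theorem C: the claim that the threshold is independent of the vortex locations. An acknowledged but unexecuted multi-scale gluing argument is not a proof, and it is not clear that the losses would compound to only $N^{-cN}$ rather than something configuration-dependent.

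The paper shows that this entire difficulty is illusory: no case distinction between separated and collapsing vortices, and no clustering, is needed. One absorbs all the Dirac masses at once by setting $u_0(x)=-4\pi\sum_{i=1}^N G(x,p_i)$ and seeks $v$ with $\Delta v+\varepsilon^{-2}e^{v+u_0}(1-e^{v+u_0})\ge 4\pi N$. The correction $v=w_0$ is obtained from a single Poisson equation $\Delta w=8\pi N f_\delta-C(\delta)$, where $f_\delta$ is a cutoff equal to $1$ on $\cup_i B_\delta(p_i)$ and supported in $\cup_i B_{2\delta}(p_i)$ with $\delta=(8\pi N)^{-1/2}$ (overlapping balls cause no problem, since only the union enters), followed by a downward shift making $u_0+w_0\le\ln\frac12$ everywhere; this shift is controlled by $N$ alone because $G\ge -C_0$ gives $u_0\le 4\pi C_0N$ and $\|w\|_{L^\infty}\le C(N+N^2\delta^2)$. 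Inside the $\delta$-balls the inequality holds because $\Delta w_0=g_\delta\ge 4\pi N$ and the nonlinear term is nonnegative there; outside them, $|x-p_i|\ge\delta$ for every $i$ gives $u_0\ge -CN(\ln N+1)$, hence $e^{u_0+w_0}\ge N^{-CN}$, and the requirement $\varepsilon^{-2}e^{u_0+w_0}(1-e^{u_0+w_0})\ge 4\pi N+C(\delta)$ reduces to $\varepsilon\le N^{-cN}$. This is where the stated bound comes from, and it is manifestly uniform in $\{p_1,\dots,p_N\}$. If you want to salvage your approach, you would need to actually carry out the hierarchical construction with quantitative control at every scale; but the Green-function ansatz above makes that unnecessary.
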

\begin{proof}
The proof of Theorem \ref{thm2.1} is also based on the monotone scheme. The only thing we need to take care of is the construction of subsolutions. Set
$$u_0(x)=-4\pi\sum_{i=1}^N G(x,p_i). $$
Then by property $(c)$ of $G(x,y)$, $u_0(x)\leq 4\pi C_0N$. We want to construct a subsolution $v$,
\begin{equation}
\label{sub1}
\Delta v+\frac 1{\varepsilon^2}e^{v+u_0}(1-e^{v+u_0})\geq 4\pi N.
\end{equation}
 Define a smooth function $f_\delta(x)$ as follows:
\begin{equation*}
f_\delta(x)=\begin{cases}1;\quad \forall x\in \cup_{i=1}^{N}B_\delta(p_i);\\
0,\quad \forall x\in \Omega\backslash\cup_{i=1}^{N}B_{2\delta}(p_i)
\end{cases}
\end{equation*}
and $0\leq f_\delta(x)\leq 1$ where $\delta$ is a parameter which will be determined later. By  a direct computation, we get
\begin{equation*}
C(\delta)=\int_\Omega 8\pi N f_\delta\leq 32\pi^2 N^2\delta^2.
\end{equation*}
Denote $g_\delta(x)=8\pi Nf_{\delta}(x)-C(\delta)$. Consider the following equation:
\begin{equation}
\label{2.2}
\Delta w=g_\delta, \quad \text{in}\quad \Omega,\quad  \int_{\Omega} wdx=0.
\end{equation}
Since $\|g_\delta\|_{L^\infty(\Omega)}\leq 8\pi N\|f_\delta\|_{L^\infty(\Omega)}+C(\delta)\leq 8\pi N+32\pi^2 N^2\delta^2$, by  standard $W^{2,p}$ estimates and Sobolev embedding, we get
\begin{equation}
\label{2.3}
\|w\|_{L^\infty(\Omega)}\leq C\|w\|_{W^{2,2}(\Omega)}\leq C\|g_\delta\|_{L^2(\Omega)}\leq C(N+N^2\delta^2).
\end{equation}
Hence we can choose $0<C_1\leq 8\pi(C+C_0)(N+N^2\delta^2)$ such that
\begin{equation}\label{2.4}
w_0=w-C_1, \quad u_0+w_0\leq -4\pi(C+C_0)(N+N^2\delta^2)<\ln \frac 12.
\end{equation}

Let $\delta=\frac{1}{\sqrt{8\pi N}}$. Then $\forall x\in B_\delta(p_i)$, we have
\begin{equation}
\label{2.5}
\begin{split}
\Delta w_0&=g_\delta\geq 4\pi N(2-8\pi N\delta^2)\\
          &\geq 4\pi N\\
          &\geq \frac 1{\varepsilon^2} e^{u_0+w_0}(e^{u_0+w_0}-1)+4\pi N.
\end{split}
\end{equation}
 Set
\begin{equation*}
\begin{split}
&\mu_0=\inf\{e^{u_0+w_0}|x\in \Omega\backslash\cup_{i=1}^N B_\delta(p_i)\},\\
&\mu_1=\sup\{e^{u_0+w_0}|x\in \Omega\backslash\cup_{i=1}^N B_\delta(p_i)\}.
\end{split}
\end{equation*}
$\mu_1<\frac 12$ by  \eqref{2.4}. It remains to estimate $\mu_0$. By the choice of $\delta$, we now have $w_0\geq -C_2 N$. By property $(c)$ of $G(x,y)$, we get
\begin{equation*}
u_0(x)=-4\pi\sum_{i=1}^N G(x,p_i)\geq C_4 N(\ln \delta-1)=-C_5N(\ln N+1),\quad \forall x\in \Omega\backslash \cup_{i=1}^N B_\delta(p_i).
\end{equation*}
The lower bound estimates of $w_0$ and $u_0$ imply that $\mu_0\geq N^{-C_6 N}$. So in order to show
\begin{equation}
\label{2.6}
\Delta w_0=g_\delta\geq \frac 1{\varepsilon^2} e^{u_0+w_0}(e^{u_0+w_0}-1)+4\pi N,\quad x\in \Omega\backslash \cup_{i=1}^N B_\delta(p_i),
\end{equation}
 we only need to guarantee $\frac 1{\varepsilon^2}\mu_0\geq 16\pi N$. This yields $\varepsilon\leq N^{-C_7 N}$. This also implies $w_0$ is a subsolution of \eqref{sub1} for any configuration $\{p_1,\cdots,p_N\}$, $\forall 0<\varepsilon\leq N^{-C_7N}.$ Then the arguments in \cite{CaffarelliYang1995} imply the existence of maximal solutions.
\end{proof}

\begin{lemma}
\label{prop2.1}
Let $u_\varepsilon$ be a sequence of solutions of  \eqref{1.1}. Then up to a subsequence, one of the following holds true:
\begin{itemize}
\item[(i)] $u_\varepsilon\rightarrow -\infty$, a.e. as $\varepsilon\rightarrow 0$.
\item[(ii)] $u_\varepsilon\rightarrow 0$, a.e. as $\varepsilon\rightarrow 0$. Moreover, $u_\varepsilon\rightarrow 0$ in $L^p(\Omega)$, $\forall p\geq 1$.
\end{itemize}
\end{lemma}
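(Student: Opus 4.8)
The plan is to obtain the dichotomy from the Green's representation of $u_\varepsilon$ together with the mass identity coming from integrating \eqref{1.1}, and to upgrade the a.e.\ statement in case (ii) to $L^p$ by a precompactness argument for the ``regular part'' of $u_\varepsilon$. First I would record two elementary a priori facts. By the maximum principle every solution of \eqref{1.1} satisfies $u_\varepsilon\le 0$ on $\Omega$: at a maximum point $x_\varepsilon\notin\{p_1^\varepsilon,\dots,p_N^\varepsilon\}$ (such a point exists since $u_\varepsilon\to-\infty$ near each vortex) one has $\Delta u_\varepsilon(x_\varepsilon)\le 0$, hence $e^{u_\varepsilon(x_\varepsilon)}(1-e^{u_\varepsilon(x_\varepsilon)})\ge 0$, i.e.\ $u_\varepsilon(x_\varepsilon)\le 0$; therefore $0\le e^{u_\varepsilon}(1-e^{u_\varepsilon})\le\frac14$ throughout. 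Integrating \eqref{1.1} over $\Omega$ (the distributional Laplacian has zero integral and each Dirac mass contributes $1$) gives $\frac1{\varepsilon^2}\int_\Omega e^{u_\varepsilon}(1-e^{u_\varepsilon})\,dx=4\pi N$; so, writing $\rho_\varepsilon:=\frac1{\varepsilon^2}e^{u_\varepsilon}(1-e^{u_\varepsilon})\ge 0$, we have $\|\rho_\varepsilon\|_{L^1(\Omega)}=4\pi N$ and $e^{u_\varepsilon}(1-e^{u_\varepsilon})\to 0$ in $L^1(\Omega)$.

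Next I would split off the singular part. Setting $\bar u_\varepsilon:=\int_\Omega u_\varepsilon\,dx$ and using property (a) of $G$ (applied to the smooth function $u_\varepsilon+4\pi\sum_iG(\cdot,p_i^\varepsilon)$, plus the exact representation of $-4\pi\sum_iG(\cdot,p_i^\varepsilon)$), the identity $-\Delta u_\varepsilon=\rho_\varepsilon-4\pi\sum_i\delta_{p_i^\varepsilon}$ yields
\[
u_\varepsilon(x)=\bar u_\varepsilon+h_\varepsilon(x),\qquad h_\varepsilon(x):=\int_\Omega G(x,y)\rho_\varepsilon(y)\,dy-4\pi\sum_{i=1}^N G(x,p_i^\varepsilon),
\]
with $\int_\Omega h_\varepsilon\,dx=0$. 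Because the singularity of $G$ is only logarithmic, $\sup_{p\in\Omega}\|G(\cdot,p)\|_{L^r(\Omega)}<\infty$ for every $r<\infty$ and $\sup_{p\in\Omega}\|\nabla_xG(\cdot,p)\|_{L^q(\Omega)}<\infty$ for every $q<2$; combining this with $\|\rho_\varepsilon\|_{L^1}=4\pi N$ through Minkowski's integral inequality shows that $\{h_\varepsilon\}$ is bounded in $L^r(\Omega)$ for every $r<\infty$ and in $W^{1,q}(\Omega)$ for every $q<2$, uniformly in $\varepsilon$. By Rellich--Kondrachov and a diagonal extraction, along a subsequence $h_\varepsilon\to h_\ast$ a.e.\ and in $L^r(\Omega)$ for every $r<\infty$, with $h_\ast\in L^r(\Omega)$ for all $r<\infty$; in particular $h_\ast$ is finite a.e.

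Then the dichotomy is immediate: along a further subsequence $\bar u_\varepsilon\to c\in[-\infty,0]$. If $c=-\infty$, then $u_\varepsilon(x)=\bar u_\varepsilon+h_\varepsilon(x)\to-\infty$ at every $x$ where $h_\varepsilon(x)\to h_\ast(x)\in\mathbb R$, hence a.e.; this is case (i). If $c\in(-\infty,0]$, then $u_\varepsilon\to u_\ast:=c+h_\ast$ a.e.\ and in every $L^r(\Omega)$, $r<\infty$, and $u_\ast\le0$ a.e. Since $0\le e^{u_\varepsilon}(1-e^{u_\varepsilon})\le\frac14$ and $u_\varepsilon\to u_\ast$ a.e., dominated convergence together with $\int_\Omega e^{u_\varepsilon}(1-e^{u_\varepsilon})\,dx\to0$ forces $\int_\Omega e^{u_\ast}(1-e^{u_\ast})\,dx=0$; as the integrand is nonnegative it vanishes a.e., so $e^{u_\ast}\in\{0,1\}$ a.e., i.e.\ $u_\ast\in\{-\infty,0\}$ a.e. But $u_\ast$ is finite a.e., hence $u_\ast\equiv0$; this forces $h_\ast\equiv-c$ and therefore $u_\varepsilon\to0$ a.e.\ and in $L^p(\Omega)$ for every $p\ge1$. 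This is case (ii).

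I expect the only genuine work to sit in the second step, namely verifying the uniform $L^r$- and $W^{1,q}$-bounds (equivalently, the uniform integrability of $|h_\varepsilon|^r$ coming from the uniform translation behaviour of the logarithmic kernel) that make $\{h_\varepsilon\}$ precompact. Once $u_\varepsilon$ is written as the scalar $\bar u_\varepsilon$ plus a precompact remainder, both alternatives and the $L^p$-statement in (ii) follow essentially for free from the mass identity, with no fine analysis of concentration of $\rho_\varepsilon$ needed.
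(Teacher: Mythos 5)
Your proof is correct, and its backbone --- writing $u_\varepsilon$ as its mean plus a Green-potential remainder that is precompact in every $L^r(\Omega)$ because $\frac1{\varepsilon^2}e^{u_\varepsilon}(1-e^{u_\varepsilon})$ has fixed mass $4\pi N$ --- is exactly the paper's decomposition $u_\varepsilon=u_{0,\varepsilon}+w_\varepsilon+d_\varepsilon$; the paper obtains the $W^{1,q}$ bound, $1<q<2$, on the remainder by a duality argument rather than by Minkowski's inequality against the kernel, but the two are interchangeable. Where you genuinely diverge is the endgame. The paper splits on whether $e^{d_\varepsilon}/\varepsilon^2$ stays bounded: if so, $e^{u_\varepsilon}\le C\varepsilon^2e^{w_\varepsilon+u_{0,\varepsilon}}\to0$ gives case (i); if not, it sets $A=\limsup e^{d_\varepsilon}$ and applies Fatou to $4\pi N\varepsilon^2e^{-d_\varepsilon}=\int_\Omega e^{w_\varepsilon+u_{0,\varepsilon}}(1-e^{u_\varepsilon})$, using the zero-mean constraint $\int_\Omega(w+u_0)=0$ to force $A=1$. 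You instead split directly on $c=\lim\bar u_\varepsilon\in[-\infty,0]$ and, when $c$ is finite, combine the vanishing of $\int_\Omega e^{u_\varepsilon}(1-e^{u_\varepsilon})=4\pi N\varepsilon^2$ with dominated convergence and the a.e.-finiteness of $c+h_\ast$ to conclude $u_\ast\equiv0$. Your route is a bit cleaner: it disposes in one stroke of the intermediate regime where $d_\varepsilon\to-\infty$ but $e^{d_\varepsilon}\gg\varepsilon^2$ (which in the paper is absorbed into the Fatou step), and it makes the $L^p$ statement in (ii) an immediate byproduct of the $L^r$ convergence of the remainder. The only points worth making explicit are the gradient bound $\sup_{y}\|\nabla_xG(\cdot,y)\|_{L^q(\Omega)}<\infty$ for $q<2$, which is standard but not among the listed properties of $G$, and the extension of the representation formula (a) to the non-$C^2$ function $u_\varepsilon$, which you already indicate how to handle.
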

\begin{proof}
Set $u_{0,\varepsilon}$ as
\begin{equation*}
\Delta u_{0,\varepsilon}=4\pi\sum_{i=1}^N\delta_{p_i^\varepsilon}-4\pi N,\quad \int_{\Omega} u_{0,\varepsilon}dx=0.
\end{equation*}
Let $d_\varepsilon=\int_{\Omega}u_\varepsilon dx$ and $u_\varepsilon=u_{0,\varepsilon}+w_\varepsilon+d_\varepsilon$. Then $w_\varepsilon$ satisfies
\begin{equation*}
\Delta w_\varepsilon+\frac 1{\varepsilon^2}e^{u_\varepsilon}(1-e^{u_\varepsilon})=4\pi N.
\end{equation*}
Claim: $\exists C_q>0$ such that $\|\nabla w_\varepsilon\|_{L^q(\Omega)}\leq C_q$, $\forall q\in (1,2)$.
\\ Let $q'=\frac{q}{q-1}>2$. Then
\begin{equation*}
\|\nabla w_\varepsilon\|_{L^q(\Omega)}\leq \sup\{\int_\Omega \nabla w_\varepsilon\nabla \varphi |\varphi\in W^{1,q'}(\Omega),\int_\Omega \varphi dx=0,\|\nabla \varphi\|_{L^{q'}(\Omega)}\leq 1\}.
\end{equation*}
By Poincar$\acute{e}$ inequality and Sobolev embedding theorem, we see that
\begin{equation*}
\|\varphi\|_{L^\infty(\Omega)}\leq C(\|\varphi\|_{L^{q'}(\Omega)}+\|\nabla \varphi\|_{L^{q'}(\Omega)})\leq C\|\nabla \varphi\|_{L^{q'}(\Omega)}.
\end{equation*}
In view of the equation $w_\varepsilon$ satisfying, one gets
\begin{equation*}
\begin{split}
\left|\int_\Omega \nabla w_\varepsilon\cdot\nabla \varphi dx\right|=\left|\int_\Omega\Delta w_\varepsilon \varphi dx\right|\leq \|\varphi\|_{L^\infty(\Omega)}\left|\int_\Omega \frac{1}{\varepsilon^2}|e^{u_\varepsilon}(1-e^{u_\varepsilon})|dx+4\pi N\right|\leq C
\end{split}
\end{equation*}
which implies $\|\nabla w_\varepsilon\|_{L^q(\Omega)}\leq C_q$. By Poincar$\acute{e}$ inequality, we also have
\begin{equation*}
\|w_\varepsilon\|_{L^q(\Omega)}\leq C\|\nabla w_\varepsilon\|_{L^q(\Omega)}.
\end{equation*}
Therefore, up to a subsequence, there exists $w\in W^{1,q}(\Omega)$ and $\forall p\geq 1$ such that
\begin{equation*}
\begin{split}
w_\varepsilon\rightharpoonup w\quad \text{in}\quad W^{1,q}(\Omega),\quad w_\varepsilon\rightarrow w\quad \text{in} \quad L^p(\Omega),\quad w_\varepsilon\rightarrow w\quad \text{a.e..}
\end{split}
\end{equation*}
It also follows from the definition of $u_{0,\varepsilon}$ that $u_{0,\varepsilon}\rightarrow u_0$ in $L^p(\Omega)$ for all $p\geq 1$.
We consider the following two possible cases.
\begin{itemize}
\item[1.] $\lim\sup_{\varepsilon\rightarrow 0}\frac{e^{d_\varepsilon}}{\varepsilon^2}\leq C$ for some constant $C$.
\item[2.] $\lim\sup_{\varepsilon\rightarrow 0}\frac{e^{d_\varepsilon}}{\varepsilon^2}=+\infty$.
\end{itemize}
If Case 1 happens, then
\begin{equation*}
e^{u_\varepsilon}=e^{w_\varepsilon+u_{0,\varepsilon}+d_\varepsilon}\leq C\varepsilon^2 e^{w_\varepsilon+u_{0,\varepsilon}}\rightarrow 0
\end{equation*}
which implies $u_\varepsilon\rightarrow -\infty$ a.e..
\\ Now we assume Case 2 happens. Since $u_\varepsilon<0$ and $\int_\Omega u_{0,\varepsilon}dx=0$, we see that $0\leq e^{d_\varepsilon}\leq 1$. This means we can find $A\geq 0$ such that
\begin{equation*}
\lim_{\varepsilon\rightarrow 0}\sup e^{d_\varepsilon}=A.
\end{equation*}
By Fatou's lemma, we get
\begin{equation*}
4\pi N\varepsilon^2e^{-d_\e}=\int_\Omega e^{w_\varepsilon+u_{0,\e}}(1-e^{u_\varepsilon})\geq \int_\Omega e^{w+u_0}(1-Ae^{w+u_0})dx.
\end{equation*}
 Since $\int_{\Omega }w+u_0dx=0$, we get $A\equiv 1.$ This ends the proof of Lemma \ref{prop2.1}.
\end{proof}

 Since our arguments always are along a subsequence, without loss of generality, in the arguments below, we always assume
\begin{equation*}
P_n=\{p_1^{\varepsilon_n},\cdots,p_N^{\varepsilon_n}\}=\{p_{1,n},\cdots,p_{N,n}\}\rightarrow \{p_1,\cdots,p_N\}=P,
\end{equation*}
$u_n\rightarrow 0$ a.e. solves
\begin{equation}
\label{new1}
\Delta u_n+\frac 1{\varepsilon_n^2} e^{u_n}(1-e^{u_n})=4\pi\sum_{i=1}^N \delta_{p_{i,n}},\quad \text{in}\quad \Omega.
\end{equation}
Set $d=\frac 14\inf_{p_i\neq p_j} |p_i-p_j|$. Then we have the following lemma.

\begin{lemma}
\label{cor2.1}
Suppose $u_n\rightarrow 0$ a.e. solves \eqref{new1}. Then for any $R_n\rightarrow +\infty$, $R_n\varepsilon_n\leq d$, we have
\begin{equation*}
\|u_n\|_{L^\infty(\Omega\backslash\cup_{i=1}^N B_{R_n\varepsilon_n}(p_{i,n}))}\rightarrow 0.
\end{equation*}
\end{lemma}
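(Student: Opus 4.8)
The plan is to combine the a.e. convergence $u_n \to 0$ (which by Lemma \ref{prop2.1} upgrades to $L^p(\Omega)$ convergence for all $p$) with elliptic estimates away from the vortices, and then propagate the decay into the scale $R_n\varepsilon_n$ by a maximum-principle / barrier argument. First I would fix a small $\rho>0$ (with $\rho<d$) and work on the region $\Omega\backslash\cup_i B_\rho(p_{i,n})$, where the right-hand side $4\pi\sum_i\delta_{p_{i,n}}$ is absent and the equation reads $\Delta u_n = -\varepsilon_n^{-2}e^{u_n}(1-e^{u_n})$. Since $u_n<0$, the nonlinearity $e^{u_n}(1-e^{u_n})$ is bounded by $1$, but the factor $\varepsilon_n^{-2}$ is dangerous; the point is that $e^{u_n}(1-e^{u_n}) = e^{u_n} - e^{2u_n}$ and in $L^p$ we have $u_n\to 0$, so one must first show that $\varepsilon_n^{-2}e^{u_n}(1-e^{u_n})$ is actually small in $L^p$ of that fixed region. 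This is where I would invoke the representation $u_n = u_{0,n} + w_n + d_n$ from the proof of Lemma \ref{prop2.1}: in Case 2 (the topological case) $e^{d_n}\to 1$, so $e^{u_n}\le C e^{w_n + u_{0,n}}$ with $w_n+u_{0,n}$ bounded in $L^p$ away from the $p_i$, and more importantly $1-e^{u_n} = 1-e^{w_n+u_{0,n}+d_n}\to 0$ in measure; combined with the integrability of $\varepsilon_n^{-2}e^{u_n}(1-e^{u_n})$ (which is $\le 8\pi N\varepsilon_n^{-2}\cdot$const from integrating the equation, hence $e^{d_n}$-controlled, i.e. $O(1)$), one gets that $\varepsilon_n^{-2}e^{u_n}(1-e^{u_n})\to 0$ in $L^p(\Omega\backslash\cup_i B_{\rho/2}(p_{i,n}))$. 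Then $W^{2,p}$ estimates plus Sobolev give $\|u_n\|_{L^\infty(\Omega\backslash\cup_i B_\rho(p_{i,n}))}\to 0$.

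Having the decay on every fixed annular region $\rho\le |x-p_{i,n}|$, the remaining and main task is to push it down to $|x-p_{i,n}| = R_n\varepsilon_n$ with $R_n\to\infty$ but $R_n\varepsilon_n\le d$. Here I would work in each punctured ball $B_d(p_{i,n})\backslash\{p_{i,n}\}$ and rescale: set $\hat u_n(y) = u_n(p_{i,n}+\varepsilon_n y)$, which solves $\Delta \hat u_n + e^{\hat u_n}(1-e^{\hat u_n}) = 4\pi\alpha_i\delta_0$ (with $\alpha_i$ the local multiplicity) on $B_{d/\varepsilon_n}(0)$, and whose boundary values on $|y| = \rho/\varepsilon_n$ tend to $0$ by the previous paragraph. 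Since $\hat u_n<0$, the function $\hat u_n$ is subharmonic away from $0$ minus the positive contribution; more precisely, away from $0$ one has $\Delta \hat u_n = -e^{\hat u_n}(1-e^{\hat u_n})\le 0$, so $\hat u_n$ is superharmonic, and one can compare it from below against a radial barrier $\psi_i$ which is the (unique, by Theorem B applied with a single vortex — this is exactly where the hypothesis on $d$ and on a single point makes Theorem B's case (a) or (b) vacuously true) topological solution of $\Delta\psi + e^\psi(1-e^\psi) = 4\pi\alpha_i\delta_0$ in $\mathbb R^2$, which is radial and satisfies $\psi(y)\to 0$ as $|y|\to\infty$. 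The matching of singularities at $0$ and the smallness of the boundary data let a sliding/comparison argument sandwich $\hat u_n$ between $\psi_i - o(1)$ and $0$ on the whole annulus $\{C \le |y|\le d/\varepsilon_n\}$ for a uniform $C$; since $\psi_i(y)\to 0$ as $|y|\to\infty$, on $|y| = R_n$ with $R_n\to\infty$ we get $\psi_i(R_n)\to 0$, hence $\hat u_n\to 0$ uniformly on $|y| = R_n$, i.e. $\|u_n\|_{L^\infty}$ on $|x-p_{i,n}| = R_n\varepsilon_n$ tends to $0$. Applying the maximum principle on the annular region between $R_n\varepsilon_n$ and $\rho$ (where $\Delta u_n\le 0$) then gives smallness on the full annulus $R_n\varepsilon_n\le|x-p_{i,n}|\le\rho$, and combining with the fixed-region estimate finishes the lemma.

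An alternative, perhaps cleaner, route for the second step avoids naming the limiting $\psi_i$: on $B_\rho(p_{i,n})\backslash B_{R_n\varepsilon_n}(p_{i,n})$ the function $u_n$ is superharmonic and negative, with $u_n = o(1)$ on the outer boundary. Then $u_n \ge \min_{\partial}u_n + (\text{harmonic interpolation})$ forces a logarithmic lower bound unless the inner boundary value is itself small; to close this I would integrate the equation over $B_r(p_{i,n})$ to get $\frac{d}{dr}\big(r\,\overline{u_n}'(r)\big)$-type identities showing that the total mass $\varepsilon_n^{-2}\int e^{u_n}(1-e^{u_n})$ absorbed in $B_{R_n\varepsilon_n}$ is $4\pi\alpha_i + o(1)$ — i.e. essentially all the local mass concentrates at the vortex scale — which via the Green representation $u_n(x) = -\!\int G\,\Delta u_n$ pins $u_n$ down to $o(1)$ on the circle $|x-p_{i,n}| = R_n\varepsilon_n$. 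I expect the main obstacle to be precisely this second step: controlling $u_n$ on the moving inner scale $R_n\varepsilon_n$, because a naive bound on the nonlinearity is useless there (the factor $\varepsilon_n^{-2}$ is not small and $e^{u_n}$ is not small near the vortex), so one genuinely needs either the rescaled comparison with the $\mathbb R^2$ profile or a quantitative mass-concentration estimate; everything on fixed-size regions away from the vortices is routine elliptic theory once Lemma \ref{prop2.1} is in hand.
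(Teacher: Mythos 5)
Your plan hinges, in its main step, on a comparison/sliding argument that sandwiches the rescaled solution $\hat u_n$ between a radial profile $\psi_i-o(1)$ and $0$ on the annulus $\{C\le |y|\le d/\varepsilon_n\}$. This step does not go through: the nonlinearity $f(t)=e^t(1-e^t)$ is increasing on $(-\infty,-\ln 2)$ and decreasing on $(-\ln 2,0)$, so the difference of two solutions satisfies an equation $\Delta w+c_n(x)w=0$ with $c_n$ of no definite sign, and there is no maximum principle available to compare $\hat u_n$ with $\psi_i$ (nor is $\psi_i-c$ a subsolution near the vortex, where $f(\psi_i-c)<f(\psi_i)$). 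This lack of a comparison principle between exact solutions is precisely why uniqueness in Theorems A and B is delicate, so it cannot be used as a tool inside this lemma. Your reduction to a single vortex of multiplicity $\alpha_i$ also ignores that a cluster $A_{i,n}$ may contain several points at mutual distance $\sim\varepsilon_n$, and the lemma is stated without the hypothesis \eqref{1.5}. The alternative route via ``all the local mass concentrates at scale $R_n\varepsilon_n$'' is circular here: that concentration statement is Lemma \ref{lem2.2}/\ref{lem2.3} of the paper, which are proved \emph{after} and \emph{using} the present lemma. Finally, your first step also has a gap: from $\int_\Omega\varepsilon_n^{-2}e^{u_n}(1-e^{u_n})=4\pi N$ (which is $O(1)$, not $o(1)$) and convergence of $1-e^{u_n}$ to $0$ in measure you cannot conclude that $\varepsilon_n^{-2}e^{u_n}(1-e^{u_n})\to 0$ in $L^p$ on a fixed region away from the vortices; nothing at this stage rules out concentration of an $O(1)$ amount of mass on a set of measure $\sim\varepsilon_n^2$ inside that region. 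The fixed-region statement is instead immediate from the mean value inequality: away from the vortices $-u_n\ge 0$ is subharmonic, so $|u_n(x)|\le |B_\delta|^{-1}\|u_n\|_{L^1(\Omega)}\to 0$.

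The paper's actual mechanism for the hard step is entirely different and avoids any comparison principle. Arguing by contradiction, suppose $u_n(x_n)\le -c_0<0$ at some $x_n$ outside the balls $B_{r_n\varepsilon_n}(q_{i,n})$. Since the complement of these balls in $B_d(0)$ is pathwise connected and $u_n\to 0$ on fixed compact sets away from $P$, the intermediate value theorem along a path from $x_n$ to $\partial B_d(0)$ produces a point $z_n$ (still outside the balls) where $u_n(z_n)=s_0$, with $s_0<0$ chosen so that $\beta(s_0)>4\pi N$, $\beta$ being the Chan--Fu--Lin mass function of \eqref{CFL2} (which tends to $+\infty$ as $s\to 0^-$). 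Blowing up at $z_n$ by $\hat u_n(x)=u_n(\varepsilon_n x+z_n)$ and using Harnack plus elliptic estimates gives an entire solution $\hat u$ of \eqref{CFL1} with $\hat u(0)=s_0$; by the radial classification and monotonicity of $\beta$, its total mass is at least $\beta(s_0)>4\pi N$, contradicting $\int_\Omega\varepsilon_n^{-2}e^{u_n}(1-e^{u_n})=4\pi N$. You would need to replace your barrier step with an argument of this quantization type (or something equivalent) for the proof to close.
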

\begin{proof}
 First, we need to show for any compact set $K\subset \Omega\backslash P$,
 \begin{equation*}
 \|u_n\|_{L^\infty(K)}\rightarrow 0.
 \end{equation*}
 Set $\delta=\frac 14 d(K,P)$. Since $u_n<0$ on $\Omega$, we note that $u_n$ is subharmonic in $B_\delta(x)$ for any $x\in K$. By using the mean value theorem and Lemma \ref{prop2.1}, we get
 \begin{equation*}
 0\leq -u_n(x)\leq \frac 1{|B_\delta|}\|u_n\|_{L^1(\Omega)}\rightarrow 0, \quad \text{as}\quad n\rightarrow \infty, \quad \forall x\in K.
 \end{equation*}
Next, we  classify the points $\{p_{1,n},\cdots,p_{N,n}\}$ according to their asymptotic behavior.
Define $A_{k,n}$ as
\begin{equation*}
A_{k,n}=\{p_{i,n}|\lim_{n\rightarrow \infty}\frac{|p_{i,n}-p_{k,n}|}{\varepsilon_n}<+\infty\}.
\end{equation*}
It's obvious that for any $i,j$, either $A_{i,n}=A_{j,n}$ or $A_{i,n}\cap A_{j,n}=\emptyset$ along a suitable subsequence. Without loss of generality, we can take $A_{i,n}$, $i=1,\cdots,l$
such that
\begin{equation*}
A_{i,n}\cap A_{j,n}=\emptyset,\quad i\neq j,\quad \cup_{i=1}^l A_{i,n}=\{p_{1,n},\cdots,p_{N,n}\}.
\end{equation*}
Set
$$r_n=\frac 14\min(\inf_{i\neq j}\frac{d(A_{i,n},A_{j,n})}{\varepsilon_n},R_n)\rightarrow +\infty.$$
So it's enough to prove that
\begin{equation*}
\|u_n\|_{L^\infty(\Omega\backslash\cup_{i=1}^lB_{r_n\varepsilon_n}(q_{i,n}))}\rightarrow 0, \quad q_{i,n}\in A_{i,n}, \quad i=1,\cdots,l.
\end{equation*}
It follows from our choice that $\cup_{i=1}^l B_{r_n\varepsilon_n}(q_{i,n})$ covers the set $\{p_{1,n},\cdots,p_{N,n}\}$ and $B_{r_n\varepsilon_n}(q_{i,n})$ $\cap B_{r_n\varepsilon_n}(q_{j,n})=\emptyset$ for $i\neq j$. Hence, the set $B_d(0)\backslash \cup_{i=1}^l B_{r_n\varepsilon_n}(q_{i,n})$ is pathwise connected. Now suppose $u_n(x_n)\leq -c_0<0$, $x_n\in \Omega\backslash\cup_{i=1}^lB_{r_n\varepsilon_n}(q_{i,n})$. Then by our first step in the proof, without loss of generality, we may assume $x_n\rightarrow 0\in P$. Choosing smooth curves $\gamma_n\subset B_d(0)\backslash \cup_{i=1}^l B_{r_n\varepsilon_n}(q_{i,n})$ joining $x_n$ and $y_0\in \partial B_d(0)$, by intermediate value theorem, we get $z_n\in \gamma_n$, $u_n(z_n)=s_0<0$ such that $\beta(s_0)>4\pi N$ with $\beta(s)$ defined in \eqref{CFL2} since it's already known $u_n(y_0)\rightarrow 0$. Set $\hat u_n(x)=u_n(\varepsilon_n x+z_n)$. Then $\hat u_n(x)$ solves
\begin{equation*}
\begin{cases}
\Delta \hat u_n+e^{\hat u_n}(1-e^{\hat u_n})=0,\quad \text{in}\quad B_{r_n/2}(0),\\
\hat u_n(x)<0,\hat u_n(0)=s_0<0,\int_{B_{r_n/2}(0)} e^{\hat u_n}(1-e^{\hat u_n})\leq 4\pi N.
\end{cases}
\end{equation*}
As $\left|\frac{e^{\hat u_n}(1-e^{\hat u_n})}{\hat u_n}\right|\leq C$ uniformly, by Harnack inequality(Theorem 8.20 in \cite{GilbargTrudinger2001}), we have $\hat u_n$ is locally uniformly bounded. Applying standard $W^{2,p}$ and Schauder's estimates, one gets
\begin{equation*}
\hat u_n\rightarrow \hat u,\quad \text{in} \quad C^2_{loc}(\mathbb R^2)
\end{equation*}
and $\hat u$ solves \eqref{CFL1} with $\hat u(0)=s_0$. Then
\begin{equation*}
4\pi N\geq \int_{\mathbb R^2} e^{\hat u}(1-e^{\hat u})dx\geq \beta(s_0)>4\pi N.
\end{equation*}
This yields a contradiction and proves present lemma.
\end{proof}

\begin{lemma}
\label{lem2.2}
Suppose $u_n$ is a topological solution of  \eqref{1.1}. Then up to a subsequence,
 for any $R_n\rightarrow +\infty$, $R_n\varepsilon_n\leq d$, we have
\begin{equation*}
\varepsilon_n^k|D^k u_n|_{L^\infty(\Omega\backslash\cup_{i=1}^N B_{R_n\varepsilon_n}(p_{i,n}))}\rightarrow 0,
\end{equation*}
for any $k\in \mathbb N$ and faster that any other power of $\frac{1}{R_n}$.
\end{lemma}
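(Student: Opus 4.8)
The plan is to rescale \eqref{new1} around a point of the shell, transfer the smallness provided by Lemma \ref{cor2.1}, and then extract exponential decay from the fact that the nonlinearity $e^{u}(1-e^{u})$ linearizes to $-u$ at $u=0$, so that away from the vortices $u_n$ essentially solves $\Delta v=v$, whose bounded solutions decay exponentially.

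I would fix a point $x_n\in\Omega\setminus\cup_{i=1}^N B_{R_n\varepsilon_n}(p_{i,n})$ and, exactly as in the proof of Lemma \ref{cor2.1}, set $v_n(y)=u_n(\varepsilon_n y+x_n)$, defined on $B_{R_n/2}(0)$. Every point of $\varepsilon_n B_{R_n/2}(0)+x_n$ lies at distance at least $\tfrac12 R_n\varepsilon_n$ from $P_n$, so no vortex enters this ball and, by \eqref{new1}, $\Delta v_n+e^{v_n}(1-e^{v_n})=0$ in $B_{R_n/2}(0)$; moreover $\varepsilon_n^k D^k u_n(x_n)=D^k v_n(0)$. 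Since $\tfrac12 R_n\to+\infty$ and $\tfrac12 R_n\varepsilon_n\le d$, Lemma \ref{cor2.1} applied with radius $\tfrac12 R_n\varepsilon_n$ gives $\delta_n:=\|v_n\|_{L^\infty(B_{R_n/2}(0))}\le\|u_n\|_{L^\infty(\Omega\setminus\cup_i B_{R_n\varepsilon_n/2}(p_{i,n}))}\to 0$, with $\delta_n$ independent of $x_n$. It then suffices to prove $|D^k v_n(0)|\le C_k\,\delta_n\,e^{-cR_n}$ with $C_k,c>0$ independent of $x_n$.

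For the decay I would put $w_n=-v_n\ge 0$ (recall that $u_n<0$ on $\Omega$); then $\Delta w_n=e^{v_n}(1-e^{v_n})$, and since $e^{t}(1-e^{t})\ge\tfrac12(-t)$ for $t\le 0$ near $0$, for $n$ large $w_n$ is a nonnegative subsolution of $\Delta-\tfrac12$ on $B_{R_n/2}(0)$ with $w_n\le\delta_n$ on the boundary. Comparing with the radial solution of $\Delta\phi-\tfrac12\phi=0$, $\phi\equiv\delta_n$ on $\partial B_{R_n/2}(0)$ — legitimate because $\Delta-\tfrac12$ obeys the maximum principle — and using an exponentially decaying radial barrier for $\Delta-\tfrac12$ (equivalently, the modified Bessel profile $I_0$), one gets $\|v_n\|_{L^\infty(B_{R_n/4}(0))}\le C\delta_n e^{-cR_n/4}=:\tilde\delta_n$. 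On $B_{R_n/4}(0)$ the equation reads $\Delta v_n=g_n$ with $g_n=-e^{v_n}(1-e^{v_n})$ and $|g_n|\le C|v_n|\le C\tilde\delta_n$. Interior $W^{2,p}$ estimates and Sobolev embedding give $\|v_n\|_{C^{1,\alpha}(B_1(0))}\le C\tilde\delta_n$; since $t\mapsto e^{t}(1-e^{t})$ is smooth, vanishes at $0$, and $v_n$ stays bounded, $\|g_n\|_{C^{j,\alpha}}\le C_j\|v_n\|_{C^{j,\alpha}}$ at each step, so Schauder estimates and induction on $k$ yield $\|v_n\|_{C^{k}(B_{1/2}(0))}\le C_k\tilde\delta_n$ for every $k$. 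Hence $\varepsilon_n^k|D^k u_n(x_n)|=|D^k v_n(0)|\le C_k C\,\delta_n e^{-cR_n/4}$; taking the supremum over $x_n$ and using that $\delta_n$ is bounded shows $\varepsilon_n^k\|D^k u_n\|_{L^\infty(\Omega\setminus\cup_i B_{R_n\varepsilon_n}(p_{i,n}))}\le C_k e^{-cR_n/4}$, which tends to $0$ faster than any power of $1/R_n$.

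The routine parts here are the rescaling and the elliptic bootstrap. \textbf{The main point that needs care is the exponential-decay barrier:} one has to verify that $\Delta w_n\ge\tfrac12 w_n$ genuinely holds once $\delta_n$ is small, to choose the comparison function correctly for the shifted operator $\Delta-\tfrac12$ on the large ball $B_{R_n/2}(0)$, and to check that the resulting constants $C_k$ and $c$ are uniform in $x_n$ — which they are, because after rescaling the equation, the radius $R_n/2$ and the bound $\delta_n$ no longer see $x_n$. This quantitative decay is precisely what converts the qualitative smallness in Lemma \ref{cor2.1} into a rate faster than every power of $1/R_n$.
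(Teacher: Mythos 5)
Your argument is correct, but it follows a genuinely different route from the paper's. The paper never uses a pointwise barrier: it runs an iterated integral bootstrap. Starting from Lemma \ref{cor2.1} and the identity $\int_\Omega e^{u_n}(1-e^{u_n})=4\pi N\varepsilon_n^2$, it bounds $\int_{\Omega_n}|u_n|\leq C\varepsilon_n^2$ via $|1-e^t|\geq |t|/(1+|t|)$, converts this into the pointwise bound $|u_n|\leq C/R_n^2$ on the slightly smaller set $\Omega_n'$ by subharmonicity and the mean value property, then feeds that back through a cutoff function to improve $\int_{\Omega_n}\frac{1}{\varepsilon_n^2}e^{u_n}(1-e^{u_n})$ to $C/R_n^2$ (see \eqref{2.7}--\eqref{2.8}); iterating gains a factor $R_n^{-2}$ at each step and yields $\|u_n\|_{L^\infty(\Omega_n)}\leq C_m R_n^{-2m}$ for every $m$, after which the same rescaling and elliptic estimates you use give the derivative bounds. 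Your approach instead exploits the sign of the linearization at $u=0$: once $\|v_n\|_{L^\infty}$ is small, $w_n=-v_n$ is a nonnegative subsolution of $\Delta-\tfrac12$, and comparison with the radial Bessel-type barrier on $B_{R_n/2}(0)$ gives genuine exponential decay $Ce^{-cR_n}$, which is strictly stronger than the paper's superpolynomial rate and arguably more transparent (it isolates the mechanism -- massiveness of the linearized operator -- rather than iterating integral identities). The price is that you must verify the differential inequality $\Delta w_n\geq\tfrac12 w_n$ uniformly, which you do correctly using the smallness from Lemma \ref{cor2.1}, and your constants are indeed independent of the base point $x_n$ since the rescaled ball and the bound $\delta_n$ do not see $x_n$. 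Both arguments establish the lemma; yours in fact proves a sharper statement.
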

\begin{proof}
Set $\Omega_n=\Omega\backslash\cup_{i=1}^N B_{R_n\varepsilon_n}(p_{i,n})$, $\Omega_n'=\Omega\backslash\cup_{i=1}^N B_{2R_n\varepsilon_n}(p_{i,n})$. By Lemma \ref{cor2.1}, we have
\begin{equation}
\label{2.7}
\begin{split}
\int_{\Omega_n}|u_n|dx&\leq (1+|u_{n}|_{L^\infty(\Omega_n)})\int_{\Omega_n}\frac{|u_n|}{1+|u_n|}\\
&\leq (1+|u_n|_{L^\infty(\Omega_n)})\int_{\Omega_n}(1-e^{u_n})\\
&\leq (1+|u_n|_{L^\infty(\Omega_n)})e^{|u_n|_{L^\infty(\Omega_n)}}\int_{\Omega_n} e^{u_n}(1-e^{u_n})\leq C\varepsilon_n^2.
\end{split}
\end{equation}
In getting \eqref{2.7}, we have used $|1-e^t|\geq \frac{|t|}{1+|t|}$, $\forall t\in\mathbb R$.
Then for any $x\in\Omega_n'$, we note that $u_n$ is subharmonic in $B_{R_n\varepsilon_n}(x)$. By mean value theorem and \eqref{2.7}, we get
\begin{equation*}
|u_n(x)|\leq \frac{C}{R_n^2\varepsilon_n^2}\int_{B_{R_n\varepsilon_n}(x)}|u_n(y)|dy\leq \frac{C}{R_n^2},\quad \forall x\in \Omega_n'.
\end{equation*}
Define $0\leq \varphi_n\in C^{\infty}(\Omega)$ such that
\begin{equation*}
\varphi_n\equiv 0,\quad \text{in}\quad \cup_{i=1}^N B_{R_n\varepsilon_n}(p_{i,n}),\quad \varphi\equiv 1,\quad \text{in}\quad \Omega_n',\quad |D^k \varphi_n|\leq C_k(R_n\varepsilon_n)^{-k},\quad k=1,2,\cdots.
\end{equation*}
Then
\begin{equation}
\label{2.8}
\begin{split}
\int_{\Omega_n'}\frac 1{\varepsilon_n^2}e^{u_n}(1-e^{u_n})&\leq \frac 1{\varepsilon_n^2}\int_{\Omega_n}e^{u_n}(1-e^{u_n})\varphi_n\\
&=-\int_{\Omega} u_n\Delta\varphi_ndx\leq \frac{C}{R_n^2\varepsilon_n^2}\int_{\Omega_n}|u_n|\leq \frac C{R_n^2}.
\end{split}
\end{equation}
Repeating the procedure of  \eqref{2.7} and \eqref{2.8}, we have
\begin{equation*}
\int_{\Omega_n}|u_n(x)|dx\leq \frac{C_m\varepsilon_n^2}{R_n^{2m}},\quad \int_{\Omega_n}\frac{1}{\varepsilon_n^2}e^{u_n}(1-e^{u_n})\leq \frac{C_m}{R_n^{2m}}.
\end{equation*}
This implies
\begin{equation*}
\|u_n\|_{L^\infty(\Omega_n)}\leq \frac{C_m}{R_n^{2m}},\quad \|\frac 1{\varepsilon_n^2}e^{u_n}(1-e^{u_n})\|_{L^\infty(\Omega_n)}\leq \frac{C_m}{R_n^{2m}\varepsilon_n^2}.
\end{equation*}
Set $\hat u_n(x)=u_n(\varepsilon_n x)$. Then $\hat u_n$ solves
\begin{equation*}
\Delta \hat u_n+e^{\hat u_n}(1-e^{\hat u_n})=0, \quad \text{in}\quad \frac{\Omega_n}{\varepsilon_n}.
\end{equation*}
Since $|\hat u_n|\leq C_m/R_n^{2m}$ in $\Omega_n/\varepsilon_n$, by standard $W^{2,p}-$estimates and Schauder estimates, we get
\begin{equation*}
|D^k\hat u_n|_{L^\infty(\Omega_n/\varepsilon_n)}\leq \frac {C_{k,m}}{R_n^{2m}},\quad \forall k\geq 0.
\end{equation*}
Scaling back to $u_n(x)$,  one get
$$|D^k u_n|_{L^\infty(\Omega\backslash\cup_{i=1}^N B_{R_n\varepsilon_n}(p_{i,n}))}\leq \frac{C_m}{R_n^{2m}\varepsilon_n^k}.$$
\end{proof}
If no confuse occurs, in the remaining part of this paper, $A_{k,n}$, $q_{k,n}$, $k=1,\cdots,l$, $r_n$  always mean the terminologies defined in Lemma \ref{cor2.1}. Suppose $p_{1,n},\cdots,p_{l_{1,1},n}\in A_{1,n}$, $l_{1,1}$ is the number of elements in $A_{1,n}$ and $p_{1,n}=0$ after a shift of coordinates. Then set $v_n=u_n(x)-2\sum_{i=1}^{l_{1,1}}\ln |x-p_{i,n}|$. We have the following important a priori estimates.
\begin{lemma}
\label{lem2.3}
Suppose $u_n$ is a topological solution of \eqref{new1}. Then
\begin{equation*}
\frac{1}{\varepsilon_n^2}e^{u_n}(1-e^{u_n})\rightarrow 4\pi\sum_{i\in I}l_i\delta_{p_i},
\end{equation*}
weakly in the sense of measure in $\Omega$, where $I\subset \{1,\cdots,N\}$ is a set of indices identifying all distinct vortices in $\{p_1,\cdots,p_N\}$, $l_i\in \mathbb N$ is the multiplicity of $p_i$, $i\in I$. And for any $\tilde r_n\leq r_n$, $\tilde r_n\rightarrow +\infty$, we have
\begin{equation*}
\int_{B_{\tilde r_n\varepsilon_n}(0)}\frac 1{\varepsilon_n^2}(1-e^{u_n})^2=4\pi l_{1,1}^2+4\pi\sum_{i=1}^{l_{1,1}}p_{i,n}\cdot\nabla v_n(p_{i,n})+o(1),\  \int_{\Omega\backslash \cup_{i=1}^{l} B_{\tilde r_n\varepsilon_n}(q_{i,n})}\frac 1{\varepsilon_n^2}(1-e^{u_n})^2=o(1).
\end{equation*}
\end{lemma}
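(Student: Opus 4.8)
\medskip
\noindent\textbf{Plan of proof.}
The plan is a cluster-by-cluster analysis after rescaling. Fix the cluster $A_{1,n}$, so $p_{1,n}=0$, put $q_{i,n}=p_{i,n}/\varepsilon_n$ (bounded, by definition of a cluster) and $\hat u_n(y)=u_n(\varepsilon_ny)$; then on $B_{\tilde r_n}(0)$
\begin{equation*}
\Delta\hat u_n+e^{\hat u_n}(1-e^{\hat u_n})=4\pi\sum_{i=1}^{l_{1,1}}\delta_{q_{i,n}},
\end{equation*}
all other clusters lying outside $B_{3\tilde r_n}(0)$. The two facts that drive everything are the uniform bound $0<e^{\hat u_n}(1-e^{\hat u_n})<\tfrac14$ (since $\hat u_n<0$) and $\int e^{\hat u_n}(1-e^{\hat u_n})\le 4\pi N$. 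Let $\hat v_n=\hat u_n-2\sum_{i=1}^{l_{1,1}}\ln\abs{y-q_{i,n}}$, which is the rescaled $v_n$ and satisfies $q_{i,n}\cdot\nabla\hat v_n(q_{i,n})=p_{i,n}\cdot\nabla v_n(p_{i,n})$; it solves $\Delta\hat v_n=-e^{\hat u_n}(1-e^{\hat u_n})$ with right-hand side bounded in $L^\infty$, while Lemma \ref{cor2.1} (rescaled) gives $\hat u_n\to0$ uniformly on $B_{\tilde r_n}\setminus\cup_iB_{S_n}(q_{i,n})$ for every $S_n\to\infty$. Standard $W^{2,p}$ and Schauder estimates then produce, along a subsequence, $\hat v_n\to v_\infty$ in $C^1_{loc}(\mathbb R^2)$ and $\hat u_n\to U:=v_\infty+2\sum_i\ln\abs{y-q_i}$ in $C^2_{loc}$ away from $q_i:=\lim q_{i,n}$; grouping the $q_i$ into distinct points $\xi_a$ with multiplicities $\mu_a$ ($\sum_a\mu_a=l_{1,1}$), $U$ solves $\Delta U+e^U(1-e^U)=4\pi\sum_a\mu_a\delta_{\xi_a}$ in $\mathbb R^2$. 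Since $\hat u_n\to0$ on a large annular region, $U$ cannot be of non-topological type, hence it is topological and, as is standard for this equation, $U$ and $\nabla U$ decay exponentially; integrating the equation then gives $\int_{\mathbb R^2}e^U(1-e^U)=4\pi\sum_a\mu_a=4\pi l_{1,1}$.

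Granting this, the first assertion follows: both $\int_{B_{\tilde r_n}}e^{\hat u_n}(1-e^{\hat u_n})\to4\pi l_{1,1}$ and $\int_{B_{\tilde r_n}}(1-e^{\hat u_n})^2\to\int_{\mathbb R^2}(1-e^U)^2$ are obtained by splitting $B_{\tilde r_n}$ into a fixed ball $B_{R_0}$ (on which $\hat u_n\to U$ uniformly) and a tail controlled by the uniform bound $\abs{\hat u_n(y)}\le Ce^{-c\abs y}$ for $\abs y\ge R_0$, which I would get from the subsolution inequality $\Delta(-\hat u_n)\ge c(-\hat u_n)$ and comparison. Running the mass computation over every cluster, adding, and using Lemma \ref{lem2.2} for the $o(1)$ mass outside $\cup_iB_{\tilde r_n\varepsilon_n}(q_{i,n})$, one gets $\varepsilon_n^{-2}e^{u_n}(1-e^{u_n})\rightharpoonup4\pi\sum_{i\in I}l_i\delta_{p_i}$ after collecting the clusters by their limit points in $\Omega$. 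For the second displayed identity, observe $\Omega\setminus\cup_iB_{\tilde r_n\varepsilon_n}(q_{i,n})\subset\Omega\setminus\cup_jB_{R_n\varepsilon_n}(p_{j,n})$ with $R_n=\tilde r_n/2\to\infty$, and there $(1-e^{u_n})^2=(e^{-u_n}-1)e^{u_n}(1-e^{u_n})\le C_mR_n^{-2m}e^{u_n}(1-e^{u_n})$ (using $\abs{u_n}\le C_mR_n^{-2m}$ from Lemma \ref{lem2.2}), so
\begin{equation*}
\int_{\Omega\setminus\cup_iB_{\tilde r_n\varepsilon_n}(q_{i,n})}\frac1{\varepsilon_n^2}(1-e^{u_n})^2\le C_mR_n^{-2m}\int_{\Omega\setminus\cup_jB_{R_n\varepsilon_n}(p_{j,n})}\frac1{\varepsilon_n^2}e^{u_n}(1-e^{u_n})\le C_mR_n^{-4m}=o(1)
\end{equation*}
by \eqref{2.8} and its iteration.

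Finally I would evaluate $\int_{\mathbb R^2}(1-e^U)^2$ by a Pohozaev identity for $U$. With $F(t)=\int_0^te^s(1-e^s)\,ds=-\frac12(1-e^t)^2$, multiplying $\Delta U=-e^U(1-e^U)$ by $y\cdot\nabla U$ and integrating over $B_R\setminus\cup_aB_\rho(\xi_a)$, using $\mathrm{div}\big((y\cdot\nabla U)\nabla U\big)=(y\cdot\nabla U)\Delta U+\tfrac12\mathrm{div}(\abs{\nabla U}^2y)$ and $(y\cdot\nabla U)e^U(1-e^U)=\mathrm{div}(F(U)y)-2F(U)$, yields
\begin{equation*}
-\int_{B_R\setminus\cup_aB_\rho(\xi_a)}(1-e^U)^2=\int_{\partial(B_R\setminus\cup_aB_\rho(\xi_a))}\Big[(y\cdot\nabla U)\partial_\nu U+F(U)(y\cdot\nu)-\tfrac12\abs{\nabla U}^2(y\cdot\nu)\Big].
\end{equation*}
As $R\to\infty$ the $\partial B_R$ contribution vanishes ($F(0)=F'(0)=0$ and exponential decay); as $\rho\to0$, writing $U=2\mu_a\ln\abs{y-\xi_a}+w_a$ near $\xi_a$ and expanding, the $\partial B_\rho(\xi_a)$ contribution tends to $-4\pi\mu_a^2-4\pi\mu_a\,\xi_a\cdot\nabla w_a(\xi_a)$, so $\int_{\mathbb R^2}(1-e^U)^2=4\pi\sum_a\mu_a^2+4\pi\sum_a\mu_a\,\xi_a\cdot\nabla w_a(\xi_a)$. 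Since $v_\infty=w_a-2\sum_{b\neq a}\mu_b\ln\abs{y-\xi_b}$ near $\xi_a$, the elementary symmetrization $\sum_{a\neq b}\mu_a\mu_b\,\xi_a\cdot\frac{\xi_a-\xi_b}{\abs{\xi_a-\xi_b}^2}=\sum_{a<b}\mu_a\mu_b=\tfrac12\big[(\sum_a\mu_a)^2-\sum_a\mu_a^2\big]$ turns the right-hand side into $4\pi l_{1,1}^2+4\pi\sum_a\mu_a\,\xi_a\cdot\nabla v_\infty(\xi_a)=4\pi l_{1,1}^2+4\pi\sum_{i=1}^{l_{1,1}}q_i\cdot\nabla v_\infty(q_i)$; transferring through the scaling via $p_{i,n}\cdot\nabla v_n(p_{i,n})=q_{i,n}\cdot\nabla\hat v_n(q_{i,n})\to q_i\cdot\nabla v_\infty(q_i)$ gives the first identity. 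The hard part will be this Pohozaev computation — carefully isolating the finite boundary terms at the (possibly coincident) vortices and doing the regular-part bookkeeping so that the constant is exactly $4\pi l_{1,1}^2$ and not $4\pi\sum_a\mu_a^2$ — together with the uniformity needed to replace $\int_{B_{\tilde r_n}}$ by $\int_{\mathbb R^2}$ on a domain that is itself growing.
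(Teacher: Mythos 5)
Your overall strategy --- rescale, pass to a limit profile $U$, prove the identity for $U$ by a Pohozaev identity, and transfer back --- is not the paper's route, and as organized it has a genuine gap: the compactness step is circular with respect to the statement being proved. You assert that Lemma \ref{cor2.1} plus ``standard $W^{2,p}$ and Schauder estimates'' give $\hat v_n\to v_\infty$ in $C^1_{loc}(\mathbb R^2)$, hence $\hat u_n\to U$ away from the vortices. But Lemma \ref{cor2.1} only controls $\hat u_n$ on $B_{\tilde r_n}\setminus\cup_i B_{S_n}(q_{i,n})$ with $S_n\to\infty$; on a \emph{fixed} compact set containing the vortices you have no lower bound for $\hat u_n$ (equivalently for $\hat v_n$: superharmonicity of $\hat v_n$ only gives $\hat v_n\ge -C\ln S_n$ there), and the scenario $\hat u_n\to-\infty$ locally uniformly off the vortex set is not excluded by anything you wrote. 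In the paper this exclusion is exactly the ``Claim'' in Lemma \ref{lem2.4}, and its proof uses the uniform bound $\varepsilon_n^{-2}\int_{B_{r_n\varepsilon_n}}(1-e^{u_n})^2\le C$ --- i.e.\ the conclusion of Lemma \ref{lem2.3} together with the boundedness of $p_{i,n}\cdot\nabla v_n(p_{i,n})$. So you are using (an unproved form of) Lemma \ref{lem2.4} to prove Lemma \ref{lem2.3}, while the paper proves them in the opposite order. A Harnack-chain argument does not rescue this, since the chain from a fixed compact set out to the region where $\hat u_n\to 0$ has length $\to\infty$.

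The paper avoids the limit profile altogether: it runs the Pohozaev identity \emph{at finite $n$} on $B_{\tilde r_n\varepsilon_n}(0)$ for $v_n=u_n-2\sum_i\ln|x-p_{i,n}|$ (equations \eqref{2.9}--\eqref{2.11}), controlling the outer boundary terms by the decay estimates of Lemma \ref{lem2.2} (which are independent of Lemma \ref{lem2.3}) and extracting the contributions $-4\pi\sum_i p_{i,n}\cdot\nabla v_n(p_{i,n})$ directly from the singular part $x\cdot\nabla f_n$; the uniform boundedness of these terms then comes from the Green representation \eqref{2.13}. Your finite-domain Pohozaev algebra (the $-4\pi\mu_a^2-4\pi\mu_a\,\xi_a\cdot\nabla w_a(\xi_a)$ vortex contributions and the symmetrization turning $\sum_a\mu_a^2$ into $l_{1,1}^2$) is correct and is essentially the same computation as the paper's, and your treatment of the second displayed estimate via Lemma \ref{lem2.2} is fine. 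To repair the proof, either perform the Pohozaev identity at finite $n$ as the paper does, or supply an independent argument for the local lower bound on $\hat u_n$ near the vortex cluster before invoking elliptic estimates; you also still owe the uniform-in-$n$ tail control needed to replace $\int_{B_{\tilde r_n}}$ by $\int_{\mathbb R^2}$, which you flag but do not carry out.
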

\begin{proof}
The first part is easy. Since for any $\varphi\in C^\infty(\Omega)$, we have
\begin{equation*}
\begin{split}
&\left|\frac 1{\varepsilon_n^2}\int_{\Omega}e^{u_n}(1-e^{u_n})\varphi-4\pi\sum_{i=1}^N\varphi(p_{i,n})\right|=\left|-\int_\Omega \Delta u_n\varphi\right|=\left|-\int_\Omega\Delta \varphi u_n\right|\\
&\leq \|\Delta \varphi\|_{L^\infty(\Omega)}\|u_n\|_{L^1(\Omega)}\rightarrow 0,\quad \text{as} \quad \varepsilon_n\rightarrow 0
\end{split}
\end{equation*}
and this proves the first part if we notice $P_n\rightarrow P$.
\par  Denote $\displaystyle f_n=2\sum_{i=1}^{l_{1,1}}\ln|x-p_{i,n}|$. And
 $v_n$ solves
\begin{equation}
\Delta v_n+\frac 1{\varepsilon_n^2}e^{u_n}(1-e^{u_n})=0,\quad \text{in}\quad B_{\tilde r_n\varepsilon_n}(0).
\end{equation}
By Lemma \ref{lem2.2} and the property of $p_{i,n}$, $i=1,\cdots,l_{1,1}$, we have
\begin{equation}
|\nabla v_n+\frac{2l_{1,1}\nu}{\tilde r_n\varepsilon_n}|=O(\frac{1}{\tilde r_n^2\varepsilon_n}),\quad \text{on}\quad \partial B_{\tilde r_n\varepsilon_n}(0)
\end{equation}
where $\nu$ is the outward normal of $\partial B_{\tilde r_n\varepsilon_n}(0)$.
In view of
\begin{equation}
\label{2.9}
\begin{split}
\int_{B_{\tilde r_n\varepsilon_n}(0)}(x\cdot\nabla v_n)\Delta v_n&=\int_{\partial B_{\tilde r_n\varepsilon_n}(0)}(x\cdot\nabla v_n)(\nabla v_n\cdot\nu)-\frac 12\int_{\partial B_{\tilde r_n\varepsilon_n}(0)}(x\cdot\nu)|\nabla v_n|^2\\
&=\frac 12\int_{\partial B_{\tilde r_n\varepsilon_n}(0)}\left(O\left(\frac 1{\tilde r_n}\right)-2l_{1,1}\right)\left(O\left(\frac{1}{\tilde r_n^2\varepsilon_n}\right)-\frac {2l_{1,1}}{\tilde r_n\varepsilon_n}\right)\\
&=4\pi l_{1,1}^2+O\left(\frac 1{\tilde r_n}\right).
\end{split}
\end{equation}
And
\begin{equation}\label{2.10}
\begin{split}
&\int_{B_{\tilde r_n\varepsilon_n}(0)}(x\cdot\nabla v_n)\frac{1}{\varepsilon_n^2}e^{u_n}(1-e^{u_n})\\
=&\int_{B_{\tilde r_n\varepsilon_n}(0)}(x\cdot\nabla u_n)\frac{1}{\varepsilon_n^2}e^{u_n}(1-e^{u_n})-(x\cdot\nabla f_n)(-\Delta v_n)\\
=&\int_{B_{\tilde r_n\varepsilon_n}(0)}\frac 1{\varepsilon_n^2}(1-e^{u_n})^2-\frac 1{2\varepsilon_n^2}\int_{\partial B_{\tilde r_n\varepsilon_n}(0)}(x\cdot\nu)(1-e^{u_n})^2+\int_{B_{\tilde r_n\varepsilon_n}(0)}(x\cdot\nabla f_n)\Delta v_n.
\end{split}
\end{equation}
And

\begin{eqnarray}
&&\int_{B_{\tilde r_n\varepsilon_n}(0)}(x\cdot\nabla f_n)\Delta v_n\nonumber\\
&=&-8\pi l_{1,1}^2+2\lim_{\delta\rightarrow 0}\sum_{i=1}^{l_{1,1}}\int_{B_{\tilde r_n\varepsilon_n}(0)\backslash B_\delta(p_{i,n})}\nabla\cdot[p_{i,n}\cdot(\ln|x-p_{i,n}|-\ln(\tilde r_n\varepsilon_n))]\Delta v_n+o(1)\nonumber\\
&=&-8\pi l_{1,1}^2-4\pi\sum_{i=1}^{l_{1,1}}p_{i,n}\cdot\nabla v_n(p_{i,n})+2\pi\sum_{i=1}^{l_{1,1}}\int_{\partial B_{\tilde r_n\varepsilon_n(0)}}(p_{i,n}\cdot\nu)(\ln|x-p_{i,n}|-\ln(\tilde r_n\varepsilon_n))\Delta v_n\nonumber\\
&&-2\pi\sum_{i=1}^{l_{1,1}}\int_{\partial B_{\tilde r_n\varepsilon_n(0)}}\nabla (p_{i,n}\cdot\nabla v_n)\cdot\nu
(\ln|x-p_{i,n}|-\ln(\tilde r_n\varepsilon_n))+o(1)\label{2.11}\\
&=&-8\pi l_{1,1}^2-4\pi\sum_{i=1}^{l_{1,1}}p_{i,n}\cdot\nabla v_n(p_{i,n})+o(1)\nonumber.
\end{eqnarray}
In order to get  \eqref{2.9}-\eqref{2.11}, we have used the estimates in Lemma \ref{lem2.2} and $\ln|x-p_{in}|-\ln(\tilde r_n\varepsilon_n)\rightarrow 0$ on $\partial B_{\tilde r_n\varepsilon_n}(0)$. Combining \eqref{2.9}-\eqref{2.11}, we get
\begin{equation}
\label{2.12}
\frac 1{\varepsilon_n^2}\int_{B_{\tilde r_n\varepsilon_n}(0)}(1-e^{u_n})^2=4\pi l_{1,1}^2+4\pi\sum_{i=1}^{l_{1,1}}p_{i,n}\cdot\nabla v_n(p_{i,n})+o(1).
\end{equation}
Now we want to show $p_{i,n}\cdot \nabla v_n(p_{i,n})$ is uniformly bounded for $i=1,\cdots,l_{1,1}$. By Green's representation formula, one gets
\begin{equation}
u_n-u_{0,n}=\int_\Omega u_n(y)dy+\int_\Omega G(x,y)\frac 1{\varepsilon_n^2}e^{u_n}(1-e^{u_n})dy.
\end{equation}
Since $G(x,y)=-\frac 1{2\pi}\ln|x-y|+\gamma(x,y)$ where $\gamma(x,y)$ is the regular part, one can expect that for $x\in B_{R\varepsilon_n}(0)$ for some fixed $R$ large enough,
\begin{eqnarray}
\label{2.13}
&&|\nabla u_n-2\sum_{i=1}^{l_{1,1}}\frac{x-p_{i,n}}{|x-p_{i,n}|^2}|\nonumber\\
&\leq& C+\int_{\Omega}\frac 1{|x-y|}\frac 1{\varepsilon_n^2} e^{u_n}(1-e^{u_n})dy+2\sum_{i=l_{1,1}+1}^{l_1}\frac{x-p_{i,n}}{|x-p_{i,n}|^2}\\
&\leq& C+\int_{B_{\varepsilon_n}(x)}\frac 1{|x-y|}\frac 1{\varepsilon_n^2} e^{u_n}(1-e^{u_n})dy+\int_{\Omega\backslash B_{\varepsilon_n}(x)}\frac 1{|x-y|}\frac 1{\varepsilon_n^2} e^{u_n}(1-e^{u_n})dy+o\left(\frac 1{\varepsilon_n}\right)\nonumber\\
&\leq& C+\frac C{\varepsilon_n}\nonumber.
\end{eqnarray}
By  \eqref{2.13} and the uniform bound of  $\frac{|p_{i,n}|}{\varepsilon_n}$, $i=1,\cdots,l_{1,1}$,  we can get $p_{i,n}\cdot\nabla v_n(p_{i,n})$ is uniformly bounded for $i=1,\cdots,l_{1,1}$.

From $|\frac 1{\varepsilon_n^2}e^{u_n}(1-e^{u_n})|_{L^1(\Omega)}=4\pi N$ and $u_n<0$, we see that for $\Omega_n= \Omega\backslash\cup_{i=1}^l B_{\tilde r_n\varepsilon_n}(q_{i,n})$
\begin{equation*}
\frac{1}{\varepsilon_n^2}\int_{\Omega_n}(1-e^{u_n})^2\leq\sup_{\Omega_n}(1-e^{u_n})\inf_{\Omega_n}e^{u_n}\int_{\Omega_n}\frac{1}{\varepsilon_n^2}e^{u_n}(1-e^{u_n})\rightarrow 0.
\end{equation*}
\end{proof}
Denote
 \begin{equation*}
 \begin{split}
 \hat u_n(x)=u_n(\varepsilon_n x), \quad \frac{p_{i,n}}{\varepsilon_n}\rightarrow \hat p_i,\quad i=1,\cdots,l_{1,1}.
\end{split}
\end{equation*}
\begin{lemma}
\label{lem2.4}
Suppose  $\hat u(x)$ is the unique topological solution of
\begin{equation*}
\begin{cases}
\displaystyle\Delta \hat u+e^{\hat u}(1-e^{\hat u})=4\pi\sum_{i=1}^{l_{1,1}}\delta_{\hat p_i},\quad \text{in}\quad \mathbb R^2,\\
\displaystyle\hat u<0,\sup_{\mathbb R^2\backslash \cup_{i=1}^{l_{1,1}}B_1(\hat p_i)}|\nabla \hat u|<+\infty,\quad \int_{\mathbb R^2}(1-e^{\hat u})dx<+\infty
\end{cases}
\end{equation*}
where $\hat p_i$ satisfies the assumptions in Theorem B. Set
\begin{equation*}
\hat v_n(x)=\hat u_n(x)-\sum_{i=1}^{l_{1,1}}\ln\frac{| x-\frac{p_{i,n}}{\varepsilon_n}|^2}{1+| x-\frac{p_{i,n}}{\varepsilon_n}|^2}=\hat u_n-h_n,\quad \hat v(x)=\hat u(x)-\sum_{i=1}^{l_{1,1}}\ln\frac{|x-\hat p_i|^2}{1+|x-\hat p_i|^2}=\hat u-h.
\end{equation*}
Then $\displaystyle \lim_{\varepsilon\rightarrow 0}\sup_{B_{ r_n}(0)}|\hat v_n-\hat v|=0$.
\end{lemma}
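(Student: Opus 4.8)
The plan is to rescale around the cluster $A_{1,n}$, prove that $\hat v_n\to\hat v$ in $C^{2}_{\mathrm{loc}}(\mathbb R^{2})$ by a blow‑up and uniqueness argument, and then upgrade this to uniform convergence on the expanding balls $B_{r_n}(0)$ by a maximum‑principle comparison exploiting that $\hat u_n$ is superharmonic away from its vortices. First the set‑up: since $u_n$ solves \eqref{new1} and $u_n<0$ on $\Omega$, the function $\hat u_n(x)=u_n(\varepsilon_n x)$ satisfies $\Delta\hat u_n+e^{\hat u_n}(1-e^{\hat u_n})=4\pi\sum_{i=1}^{N}\delta_{p_{i,n}/\varepsilon_n}$ on $\Omega/\varepsilon_n$ with $0<e^{\hat u_n}<1$, and by the choice of $r_n$ the only vortices inside $B_{r_n}(0)$ are $a_n^{i}:=p_{i,n}/\varepsilon_n$, $i=1,\dots,l_{1,1}$, which stay in a fixed ball $B_{C_0}(0)$ and satisfy $a_n^{i}\to\hat p_i$. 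From $\Delta\ln\frac{|x-a|^{2}}{1+|x-a|^{2}}=4\pi\delta_a-\frac{4}{(1+|x-a|^{2})^{2}}$ one gets that $\hat v_n=\hat u_n-h_n$ solves the Dirac‑free equation
\[
\Delta\hat v_n=-e^{\hat u_n}(1-e^{\hat u_n})+\sum_{i=1}^{l_{1,1}}\frac{4}{(1+|x-a_n^{i}|^{2})^{2}}\qquad\text{in }B_{r_n}(0),
\]
and $\hat v=\hat u-h$ solves the analogous equation on $\mathbb R^{2}$; since $e^{h_n}=\prod_i\frac{|x-a_n^{i}|^{2}}{1+|x-a_n^{i}|^{2}}$ is smooth, the right‑hand side here is as regular as $\hat v_n$ and is bounded by $\frac14+Cl_{1,1}$.

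For the local convergence, Lemma~\ref{lem2.3} (rescaled) gives, for each fixed $R$, uniform bounds on $\int_{B_R(0)}e^{\hat u_n}(1-e^{\hat u_n})\,dx$ and $\int_{B_R(0)}(1-e^{\hat u_n})^{2}\,dx$. As in the proof of Lemma~\ref{lem2.3}, Harnack's inequality near the vortices yields uniform $C^{1}_{\mathrm{loc}}$ bounds for $\hat v_n$, and interior $W^{2,p}$ and Schauder estimates bootstrapped through the equation above give uniform $C^{k}_{\mathrm{loc}}(\mathbb R^{2})$ bounds. Hence, along a subsequence, $\hat v_n\to V$ in $C^{2}_{\mathrm{loc}}(\mathbb R^{2})$ and $\hat u_n\to U:=V+h$ in $C^{2}_{\mathrm{loc}}(\mathbb R^{2}\setminus\{\hat p_i\})$, with $U<0$ solving $\Delta U+e^{U}(1-e^{U})=4\pi\sum_{i=1}^{l_{1,1}}\delta_{\hat p_i}$. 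By Fatou, $\int_{\mathbb R^{2}}(1-e^{U})^{2}<\infty$ and $\int_{\mathbb R^{2}}e^{U}(1-e^{U})<\infty$, so $\int_{\mathbb R^{2}}(1-e^{U})=\int_{\mathbb R^{2}}(1-e^{U})^{2}+\int_{\mathbb R^{2}}e^{U}(1-e^{U})<\infty$; as $U$ is locally bounded and has finite mass, it is a topological solution, so $U=\hat u$ by the uniqueness in Theorem~B. The limit being uniquely identified, the full sequence converges: $\hat v_n\to\hat v$ in $C^{2}_{\mathrm{loc}}(\mathbb R^{2})$.

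To pass from this to uniform convergence on $B_{r_n}(0)$, fix $\delta>0$. Since $\hat u$ is topological, $\hat u(x)\to0$ as $|x|\to\infty$, and $h(x)=-\sum_i\ln(1+|x-\hat p_i|^{-2})=O(|x|^{-2})$; hence there is $R_\delta>2C_0$ with $|\hat u|\le\delta$, $|h|\le\delta$, and so $|\hat v|\le2\delta$ on $\{|x|\ge R_\delta\}$, and also $|h_n|\le\delta$ there because $a_n^{i}\in B_{C_0}(0)$. By the previous paragraph, for all large $n$ we have $\|\hat v_n-\hat v\|_{L^{\infty}(B_{R_\delta})}\le\delta$ and (by $C^{2}_{\mathrm{loc}}$ convergence, as $\partial B_{R_\delta}(0)$ avoids the vortices) $|\hat u_n|\le2\delta$ on $\partial B_{R_\delta}(0)$, while Lemma~\ref{lem2.2} (rescaled, applied with $R_n=r_n/2$) gives $|\hat u_n|\le\delta$ on $\partial B_{r_n}(0)$. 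Now on the closed annulus $\overline{B_{r_n}(0)\setminus B_{R_\delta}(0)}$, which contains no vortex, $\hat u_n$ is superharmonic because $\Delta\hat u_n=-e^{\hat u_n}(1-e^{\hat u_n})<0$ there; therefore its minimum over this annulus is attained on $\partial B_{R_\delta}(0)\cup\partial B_{r_n}(0)$ and is $\ge-2\delta$, whence $-2\delta\le\hat u_n<0$, and so $|\hat v_n|\le|\hat u_n|+|h_n|\le3\delta$, on $B_{r_n}(0)\setminus B_{R_\delta}(0)$. Combining with the bound on $B_{R_\delta}(0)$ yields $\|\hat v_n-\hat v\|_{L^{\infty}(B_{r_n}(0))}\le\max\{\delta,\,3\delta+2\delta\}=5\delta$ for all large $n$, and letting $\delta\to0$ proves the lemma.

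I expect the main obstacle to be the local‑convergence step: one must produce uniform bounds for $\hat v_n$ near the moving vortices and, above all, verify that the blow‑up limit $U$ has finite total mass so that it is a topological solution to which Theorem~B applies — this is exactly what the preliminary estimates of Section~2, in particular Lemma~\ref{lem2.3}, are designed to supply. Once that is in place, the upgrade to uniform convergence on $B_{r_n}(0)$ is comparatively soft, using only the superharmonicity of $\hat u_n$ away from the vortices and the fast decay of $\hat u_n$ on $\partial B_{r_n}(0)$ furnished by Lemma~\ref{lem2.2}.
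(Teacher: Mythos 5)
Your proposal is correct, and its second half takes a genuinely different route from the paper. The local-convergence step is essentially the paper's: you need the uniform lower bound for $\hat u_n$ on $B_{r_n}(0)\backslash\cup_i B_1(\hat p_i)$ (the paper's ``Claim''), obtained by contradiction from Harnack's inequality together with the uniform bound $\int_{B_{r_n}}(1-e^{\hat u_n})^2\le C$ supplied by Lemma \ref{lem2.3}, then a quadratic barrier to bound $\hat v_n$ inside the vortex balls, elliptic estimates, and identification of the limit via the uniqueness in Theorem B; your phrase ``Harnack's inequality near the vortices yields uniform $C^1_{loc}$ bounds'' compresses exactly this and should be expanded along those lines. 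Where you diverge is the upgrade from $C^2_{loc}$ to uniform convergence on the expanding balls $B_{r_n}(0)$. The paper computes the Pohozaev-type identity of Lemma \ref{lem2.3} exactly (which requires the boundedness of $p_{i,n}\cdot\nabla v_n(p_{i,n})$ via the Green representation), matches it with the corresponding identity for $\hat u$, and concludes by Fatou that $\|e^{\hat u_n}-e^{\hat u}\|_{L^2(B_{r_n})}=o(1)$, i.e.\ no energy is lost; this is then bootstrapped through $\|h_n-h\|_{L^2}$, $\|g_n-g\|_{L^2}$ and $W^{2,p}$ estimates to a sup bound. You instead exploit that $\hat u_n$ is superharmonic on the vortex-free annulus $B_{r_n}(0)\setminus B_{R_\delta}(0)$ and apply the minimum principle with boundary control coming from Lemma \ref{cor2.1} on $\partial B_{r_n}(0)$ (the hypotheses $r_n/2\to\infty$, $(r_n/2)\varepsilon_n\le d$, and the separation $d(A_{1,n},A_{j,n})\ge 4r_n\varepsilon_n$ do ensure $\partial B_{r_n\varepsilon_n}(0)$ lies outside all the excised balls) and from local convergence on $\partial B_{R_\delta}(0)$. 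This barrier argument is softer: it bypasses the sharp identity and the $\nabla v_n(p_{i,n})$ estimate entirely for this step, at the cost of only giving $L^\infty$ information (the paper's route additionally yields the quantitative $L^2$ convergence of $e^{\hat u_n}$ on $B_{r_n}(0)$, which is what Lemma \ref{lem2.3} is really set up to deliver). Both arguments reach the stated conclusion.
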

\begin{proof}
Claim: $\displaystyle \sup_{B_{r_n}(0)\backslash\cup_{i=1}^{l_{1,1}}B_1(\hat p_i)}|\hat u_n|\leq C.$
\\ Suppose the claim isn't true. We can pick up $z_n\in B_{r_n}(0)\backslash\cup_{i=1}^{l_{1,1}}B_1(\hat p_i)$ such that $\hat u_n(z_n)\rightarrow -\infty$. By Lemma \ref{lem2.2}, $z_n$ must be uniformly bounded. Otherwise, one can define $\tilde r_n=\frac{|z_n|}{4}\rightarrow +\infty$ and $\Omega_n=\Omega\backslash\cup_{i=1}^l B_{\tilde r_n\varepsilon_n}(q_{i,n})$. Then Lemma \ref{lem2.2} tells us that $\|u_n\|_{L^\infty(\Omega_n)}\rightarrow 0$ which contradicts to $u_n(z_n\varepsilon_n)=\hat u_n(z_n)\rightarrow -\infty$ as $z_n\varepsilon_n \in \Omega_n$. Because $\hat u_n$ solves
\begin{equation*}
\Delta \hat u_n+e^{\hat u_n}(1-e^{\hat u_n})=0,\quad \text{in}\quad B_{r_n}(0)\backslash\cup_{i=1}^{l_{1,1}}B_1(\hat p_i),\quad \hat u_n<0
\end{equation*}
and $\left|\frac{e^{\hat u_n}(1-e^{\hat u_n})}{\hat u_n}\right|\in L^\infty$, by Harnack inequality, we have
\begin{equation*}
\hat u_n\rightarrow -\infty,\quad \text{locally in any compact set }K\subset B_{r_n}(0)\backslash\cup_{i=1}^{l_{1,1}}B_1(\hat p_i).
\end{equation*}
This implies that
\begin{equation*}
\int_{B_R(0)}(1-e^{\hat u_n})^2\geq \frac{1}{4}\pi R^2\rightarrow +\infty,\quad \text{as} \quad R\rightarrow +\infty
\end{equation*}
which contradicts to
\begin{equation*}
\int_{B_R(0)}(1-e^{\hat u_n})^2\leq \frac 1{\varepsilon_n^2}\int_{B_{r_n\varepsilon_n}(0)}(1-e^{u_n})^2\leq C.
\end{equation*}
This proves our claim. Since $\hat v_n$ solves
\begin{equation*}
\Delta\hat v_n +e^{\hat u_n}(1-e^{\hat u_n})=\sum_{i=1}^{l_{1,1}}\frac{4}{(1+|x-\frac{p_{i,n}}{\varepsilon_n}|^2)^2}=g_n,\quad \text{in}\quad B_{r_n}(0).
\end{equation*}
By the claim, we have
\begin{equation*}
\|\hat v_n\|_{L^\infty(\partial B_R)}\leq C_R,\quad \text{for}\quad R\geq \max_{1\leq i\leq l_{1,1}}|\hat p_i|+1.
\end{equation*}
Applying maximum principle to $\Delta\hat  v_n\pm (4l_{1,1}+1)\gtrless 0$, one gets $\|\hat v_n\|_{L^\infty(B_R)}\leq C_R$. From the standard $W^{2,p}$ estimates, we have
$\hat v_n\rightarrow \hat v$ in $W^{2,p}_{loc}(\mathbb R^2)$ for $p>1$. By Sobolev embedding theorem, we have $\nabla \hat v_n(\frac{p_{i,n}}{\varepsilon_n})\rightarrow \nabla\hat v(\hat p_i)$. This implies
\begin{equation*}
\int_{B_{r_n}(0)}(1-e^{\hat u_n})^2=4\pi l_{1,1}^2+4\pi\sum_{i=1}^{l_{1,1}}\hat p_i\cdot\nabla\hat v(\hat p_i)+o(1).
\end{equation*}
Then by Fatou's Lemma
\begin{equation*}
\int_{B_{r_n}(0)}(e^{\hat u_n}-e^{\hat u})^2=\int_{B_{r_n}(0)}(e^{\hat u_n}-1)^2+(e^{\hat u}-1)^2-2\int_{B_{r_n}(0)}(1-e^{\hat u_n})(1-e^{\hat u})\leq o(1).
\end{equation*}
This again implies
\begin{equation*}
o(1)=\int_{B_{r_n}(0)\backslash B_{R}(0)} (e^{\hat u_n}-e^{\hat u})^2\geq \min_{B_{r_n}(0)\backslash B_{R}(0)}(e^{2\hat u_n},e^{2\hat u})\int_{B_{r_n}(0)\backslash B_{R}(0)} (\hat u_n-\hat u)^2,
\end{equation*}
or $\int_{B_{r_n}(0)\backslash B_{R}(0)} (\hat u_n-\hat u)^2=o(1)$.
Direct computation yields that
\begin{equation*}
|h_n-h|_{L^2(B_{r_n}(0)\backslash B_{R}(0))}+|g_n-g|_{L^2(B_{r_n}(0))}=o(1),\quad \text{as}\quad n\rightarrow \infty.
\end{equation*}
By the local convergence of $\hat v_n\rightarrow \hat v$ and standard $W^{2,p}$ estimates, we have
$$\lim_{\varepsilon_n\rightarrow 0}\sup_{B_{r_n}(0)} |\hat v_n-\hat v|\rightarrow 0.$$
\end{proof}

\section{The existence and uniqueness of topological solutions}
 \textbf{Proof of Theorem \ref{thm1.2}}. By Theorem \ref{thm2.1}, for any configuration $\{p_1,\cdots,p_N\}$, $\exists \varepsilon_N>0$,  $\forall 0<\varepsilon\leq \varepsilon_N$,  \eqref{1.4} admits a maximal solution. Suppose   $u_n$ is a sequence of  maximal solutions  of \eqref{1.1} with configuration $\{p_{1,n},\cdots,p_{N,n}\}$ and coupling parameter $\varepsilon_n\rightarrow 0$. By Lemma \ref{prop2.1}, we have either $u_n\rightarrow 0$ a.e. or $u_n\rightarrow-\infty$ a.e.. Set $v_n=u_n-u_{0,n}$. Since $w_0$ constructed in Theorem \ref{thm2.1} is a subsolution of
 \begin{equation*}
 \Delta v_n+\frac{1}{\varepsilon_n^2}e^{v_n+u_{0,n}}(1-e^{v_n+u_{0,n}})=4\pi N,
 \end{equation*}
 by the monotone decreasing property of maximal solutions with respect to $\varepsilon$, we have $v_n\geq w_0$ which is uniformly bounded from below. This implies $u_n=v_n+u_{0,n}\rightarrow 0$ a.e. and proves the existence part.

Now suppose there are two different topological solutions of  \eqref{1.1} $u_{1,n}$, $u_{2,n}$. Set
$$\phi_n(x)=\frac{(u_{1,n}-u_{2,n})(x)}{|u_{1,n}-u_{2,n}|_{L^\infty(\Omega)}},\quad |\phi_n(x_n)|=\|\phi_n\|_{L^\infty(\Omega)}=1. $$
We consider the following two cases.
\begin{itemize}
\item[(I)] $\tilde r_n=\frac 14\min(\frac{|x_n-p_{1,n}|}{\varepsilon_n},\cdots,\frac{|x_n-p_{N,n}|}{\varepsilon_n})\rightarrow +\infty$. By Lemma \ref{lem2.2},  one has
    \begin{equation*}
    \|u_{1,n}\|_{L^\infty(B_{\tilde r_n\varepsilon_n}(x_n))}, \|u_{2,n}\|_{L^\infty(B_{\tilde r_n\varepsilon_n}(x_n))}\rightarrow 0 .
    \end{equation*}
    Set $\hat \phi_n(x)=\phi_n(\varepsilon_n x+x_n)$, $\hat u_{i,n}(x)=u_{i,n}(\varepsilon_n x+x_n)$, $i=1,2$. Then $\hat \phi_n$ satisfies
    \begin{equation}
    \Delta \hat\phi_n+e^{\hat u_n}(1-2e^{\hat u_n})\hat \phi_n=0,\quad \text{in}\quad B_{\tilde r_n}(0),
    \end{equation}
    where $\hat u_n$ is between $\hat u_{1,n}$, $\hat u_{2,n}$.
      From standard $W^{2,p}$ estimates and Schauder's estimates, we obtain a subsequence $\hat \phi_n\rightarrow \hat \phi$ in $C^2_{loc}(\mathbb R^2)$ with $\hat\phi$ satisfying
    \begin{equation*}
    \begin{split}
    &\Delta \hat \phi-\hat \phi=0,\quad\text{in}\quad \mathbb R^2,\\
    &|\hat \phi(0)|=\|\hat \phi\|_{L^\infty}=1.
    \end{split}
    \end{equation*}
\item[(II)] $\tilde r_n\leq C<+\infty$. Without loss of generality, we may assume $p_{1,n}=0$ and $\frac{|x_n|}{\varepsilon_n}\leq C$. Set $\hat \phi_n(x)=\phi_n(\varepsilon_n x+x_n).$ Then as in Case (I), by Lemma \ref{lem2.4} and Theorem B, we obtain a subsequence $\hat \phi_n\rightarrow \hat \phi$ in $C^2_{loc}(\mathbb R^2)$ with $\hat \phi$ satisfying
    \begin{equation*}
    \begin{split}
    &\Delta \hat \phi+e^{\hat u}(1-2e^{\hat u})\hat \phi=0,\quad\text{in}\quad \mathbb R^2,\\
    &|\hat \phi(0)|=\|\hat \phi\|_{L^\infty}=1.
    \end{split}
    \end{equation*}
    Here $\hat u$ is the unique topological solution of
    \begin{equation*}
    \begin{cases}
    \Delta \hat u+e^{\hat u}(1-e^{\hat u})=4\pi\sum_{i=1}^{l_{1,1}} \delta_{\hat p_i},\quad \text{in}\quad \mathbb R^2,\\
    \int_{\mathbb R^2}(1-e^{\hat u})^2<\infty,\quad \hat u<0.
    \end{cases}
    \end{equation*}
    \end{itemize}
    In order to obtain contradiction, we need to show $\hat \phi\in H^2(\mathbb R^2)$ both in Case (I) and (II). Let $\eta(x)\in C_c^\infty(\mathbb R^2)$, $\eta\equiv 1$ in $B_1(0)$, $\eta \equiv 0$ in $B_2^c(0)$, $\eta_R(x)=\eta(\frac xR)$. Taking $\eta_R^2\hat\phi$ as a test function, we get
    \begin{equation*}
    \int_{\mathbb R^2}\eta_R^2|\nabla \hat \phi|^2+e^{2\hat u}\eta_R^2\hat\phi^2\leq \frac 12 \int_{\mathbb R^2}\eta_R^2|\nabla \hat \phi|^2+\frac C{R^2}\int_{B_{2R}\backslash B_R}\hat \phi^2+\int_{\mathbb R^2} e^{\hat u}(1-e^{\hat u})\eta_R^2\hat\phi^2\leq C.
    \end{equation*}
    This implies $\hat \phi\in H^1(\mathbb R^2)$. Taking $\eta_R^2\Delta \hat \phi$ as a test function, one gets,
    \begin{equation*}
    \begin{split}
    \int_{\mathbb R^2}\eta_R^2|D_{ij}\hat \phi|^2&=\int_{\mathbb R^2}2\eta_RD_i\eta_RD_i\hat \phi D_{jj}\hat\phi-\int_{\mathbb R^2}2\eta_RD_j\eta_RD_i\hat \phi D_{ij}\hat\phi+\int_{\mathbb R^2}e^{\hat u}(1-2e^{\hat u})\eta_R^2\hat \phi\Delta\hat\phi\\
    &\leq \frac 12\int_{\mathbb R^2}\eta_R^2|D_{ij}\hat \phi|^2 +C\leq 2C.
    \end{split}
    \end{equation*}
    Now we have $\hat\phi\in H^2(\mathbb R^2)$,  by Theorem B, $\hat \phi\equiv 0$ which contradicts to $|\hat \phi(0)|=1$.
\section{Construction of topological solutions}
Suppose $\psi(x)$ is a topological solution of
\begin{equation}
\label{4.1}
\Delta \psi+e^{\psi}(1-e^{\psi})=4\pi\sum_{i=1}^l\alpha_i\delta_{p_i},\quad \text{in}\quad \mathbb R^2.
\end{equation}
The corresponding linearized operator $L$ is $\Delta +e^\psi(1-2e^{\psi})$ and we assume that
\begin{equation}
\|L h\|_{L^2(\mathbb R^2)}\geq C\|h\|_{H^2(\mathbb R^2)},\quad \forall h\in H^2(\mathbb R^2) \quad \text{for some constant }C>0.
\end{equation}
Also by Han \cite{Han2000}, for $R>\max |p_i|+1$, we have
\begin{equation}
\label{to1}
|\psi(x)|+|\nabla \psi(x)|\leq c_1e^{-c_2|x|}, \quad \forall |x|\geq R,\quad \text{for some constants } c_1,c_2>0.
\end{equation}
Consider a cut-off function $\eta(x)$ with $\eta(x)\equiv 1$ in $B_\delta(0)$, $\eta(x)\equiv 0$ in $B_{2\delta}^c(0)$ for some $\delta>0$ small and $F_\varepsilon(v)$ as follows
\begin{equation}
\label{4.3}
F_\varepsilon(v)=\Delta v+\frac 1{\varepsilon^5} e^{\eta\psi_\varepsilon+\varepsilon^3 v}(1-e^{\eta\psi_\varepsilon+\varepsilon^3 v})-\frac{\eta}{\varepsilon^5}e^{\psi_{\varepsilon}}(1-e^{\psi_\varepsilon})+\varepsilon^{-3}(2\nabla\eta\cdot\nabla\psi_\varepsilon+\psi_\varepsilon\Delta\eta),
\end{equation}
where $\psi_\varepsilon(x)=\psi(\frac x{\varepsilon})$. By a direct computation, it can be checked that $u_\varepsilon=\eta\psi_\varepsilon+\varepsilon^3 v_{\varepsilon}$ is a solution of
\begin{equation}
\label{4.4}
\Delta u_\varepsilon+\frac 1{\varepsilon^2}e^{u_{\varepsilon}}(1-e^{u_\varepsilon})=4\pi\sum_{i=1}^l \alpha_i\delta_{\varepsilon p_i},\quad \text{in}\quad \Omega
\end{equation}
provided $F_\varepsilon(v_\varepsilon)=0$ and $\varepsilon$ is small enough.
\begin{lemma}
\label{lem4.1}
There is a constant $\varepsilon_0>0$ such that if $0<\varepsilon<\varepsilon_0$, we have
\begin{itemize}
\item[(I)] $\|F_\varepsilon(0)\|_{L^2(\Omega)}\leq c_1e^{-c_2/\varepsilon}$ for some constants $c_1,c_2>0$.
\item[(II)] $DF_\varepsilon(0)$ is an isomorphism from $H^2(\Omega)$ onto $L^2(\Omega)$, moreover, we have
\begin{equation*}
\|DF_\varepsilon(0) h\|_{L^2(\Omega)}\geq C\|h\|_{H^2(\Omega)},\quad \forall h\in H^2(\Omega) \quad \text{for some constant }C>0.
\end{equation*}
\item[(III)] $\|DF_\varepsilon(v)h-DF_\varepsilon(0)h\|_{L^2(\Omega)}\leq C\varepsilon \|h\|_{H^2(\Omega)}$, for  $\|v\|_{H^2(\Omega)}\leq 1,$ $\forall h\in H^2(\Omega)$.
\end{itemize}
\end{lemma}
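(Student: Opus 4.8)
The plan is to estimate the three items separately, treating $F_\varepsilon(0)$, the linearization $DF_\varepsilon(0)$, and the Lipschitz-type difference $DF_\varepsilon(v)-DF_\varepsilon(0)$ in turn, always exploiting the exponential decay \eqref{to1} of $\psi$ to control the cut-off error terms, and exploiting the non-degeneracy hypothesis \eqref{1.9} together with a scaling argument to transfer the isomorphism property from $\mathbb R^2$ to $\Omega$.

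For (I), I would plug $v=0$ into \eqref{4.3}. The terms $\Delta(\eta\psi_\varepsilon)$ and $\frac{\eta}{\varepsilon^5}e^{\psi_\varepsilon}(1-e^{\psi_\varepsilon})$ combine using the equation \eqref{4.1} satisfied by $\psi$: on $B_\delta(0)$ (where $\eta\equiv 1$) one has $\Delta\psi_\varepsilon = \varepsilon^{-2}\Delta\psi(\cdot/\varepsilon)$, and the point vortices at $\varepsilon p_i$ cancel against the right side of \eqref{4.4}, so $F_\varepsilon(0)$ is supported in the annulus $B_{2\delta}(0)\setminus B_\delta(0)$ and on that annulus every surviving term — namely $2\nabla\eta\cdot\nabla\psi_\varepsilon+\psi_\varepsilon\Delta\eta$ times $\varepsilon^{-3}$, plus the nonlinear remainder coming from comparing $e^{\eta\psi_\varepsilon}(1-e^{\eta\psi_\varepsilon})$ with $\eta e^{\psi_\varepsilon}(1-e^{\psi_\varepsilon})$ — involves $\psi(x/\varepsilon)$ or $\nabla\psi(x/\varepsilon)$ evaluated at $|x/\varepsilon|\geq \delta/\varepsilon$, which by \eqref{to1} is $O(e^{-c_2\delta/\varepsilon})$. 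Pulling out at worst a polynomial factor $\varepsilon^{-5}$ and integrating over a region of bounded measure gives $\|F_\varepsilon(0)\|_{L^2(\Omega)}\leq c_1 e^{-c_2/\varepsilon}$ after renaming constants.

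For (II), a direct computation from \eqref{4.3} gives, on $B_\delta(0)$, $DF_\varepsilon(0)h=\Delta h+\varepsilon^{-2}e^{\psi_\varepsilon}(1-2e^{\psi_\varepsilon})h + (\text{error supported on the annulus})$. Rescaling by setting $h(x)=\tilde h(x/\varepsilon)$, one sees that the main part of $\varepsilon^2 DF_\varepsilon(0)$ becomes exactly the operator $L$ of \eqref{1.8}, \eqref{4.2} acting on $\tilde h$; the periodicity of $\Omega$ versus $\mathbb R^2$ and the annulus error contribute only exponentially small corrections by \eqref{to1}, so the a priori bound \eqref{1.9} on $\mathbb R^2$ passes to $\Omega$ after undoing the scaling (the Sobolev norms on $\Omega$ and on $\mathbb R^2$ differ by fixed powers of $\varepsilon$ that cancel on the two sides of the inequality). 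Since $DF_\varepsilon(0)$ is a bounded operator $H^2(\Omega)\to L^2(\Omega)$ satisfying a lower bound, it is injective with closed range; Fredholm theory (the operator is a compact perturbation of $\Delta$, which has index zero on the torus up to the constants — one handles the kernel of $\Delta$ separately since $1-2e^{\psi_\varepsilon}$ is not identically zero) gives surjectivity, hence it is an isomorphism.

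For (III), I would compute $DF_\varepsilon(v)h-DF_\varepsilon(0)h$; the Laplacian cancels and one is left with $\varepsilon^{-5}$ times the difference of $\partial_t\big[e^{t}(1-e^{t})\big]$ evaluated at $\eta\psi_\varepsilon+\varepsilon^3 v$ versus at $\eta\psi_\varepsilon$, multiplied by $\eta h$. Since $\partial_t[e^t(1-e^t)]=e^t-2e^{2t}$ is smooth with bounded derivatives for $t\leq 0$ (and $\eta\psi_\varepsilon+\varepsilon^3 v$ need not be negative, but $e^t-2e^{2t}$ is still Lipschitz on any set where $t$ is bounded above, which holds once $\|v\|_{H^2}\le1$ and $\varepsilon$ is small, using $\psi\le 0$ and $H^2(\Omega)\hookrightarrow L^\infty(\Omega)$ in two dimensions), the mean value theorem bounds the difference pointwise by $C\varepsilon^{-5}\cdot\varepsilon^3|v|\cdot|h| = C\varepsilon^{-2}|v||h|$. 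The dangerous negative power $\varepsilon^{-2}$ is killed because the factor is supported where $\eta\ne0$, i.e. in $B_{2\delta}(0)$, on which $e^{\psi_\varepsilon}=e^{\psi(x/\varepsilon)}$ together with the already-present structure forces extra smallness; more cleanly, one absorbs the dangerous power by noting that on $B_{2\delta}(0)$ outside a small ball the relevant coefficient is exponentially small by \eqref{to1}, while on the small ball $B_{\varepsilon^{1/2}}(0)$, say, the measure is $O(\varepsilon)$, and a Hölder estimate combined with the Sobolev embedding $H^2(\Omega)\hookrightarrow L^\infty(\Omega)$ yields the claimed bound $C\varepsilon\|h\|_{H^2(\Omega)}$. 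The main obstacle I anticipate is exactly this last point — tracking the negative powers of $\varepsilon$ in part (III) and showing that the cut-off localization plus the decay of $\psi$ really do produce a net positive power of $\varepsilon$; the analogous bookkeeping in part (I) is easier because there the whole term is supported on the annulus where $\psi_\varepsilon$ is uniformly exponentially small, whereas in (III) the support reaches into the core region where $\psi_\varepsilon$ is $O(1)$.
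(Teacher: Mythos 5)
Your part (I) is essentially the paper's argument and is fine. The problems are in (II) and (III).

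In (III) you have miscounted the powers of $\varepsilon$. Differentiating $\varepsilon^{-5}e^{\eta\psi_\varepsilon+\varepsilon^3v}(1-e^{\eta\psi_\varepsilon+\varepsilon^3v})$ in $v$ produces a chain-rule factor $\varepsilon^{3}h$ (not a factor $\eta h$), so
\begin{equation*}
DF_\varepsilon(v)h-DF_\varepsilon(0)h=\frac{1}{\varepsilon^{2}}e^{\eta\psi_\varepsilon}\left(e^{\varepsilon^{3}v}-1\right)h-\frac{2}{\varepsilon^{2}}e^{2\eta\psi_\varepsilon}\left(e^{2\varepsilon^{3}v}-1\right)h,
\end{equation*}
and the mean value theorem applied to $e^{\varepsilon^{3}v}-1$ contributes a \emph{second} factor $\varepsilon^{3}|v|$. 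The net pointwise bound is $C\varepsilon^{-2}\cdot\varepsilon^{3}|v||h|=C\varepsilon|v||h|$, and (III) follows in one line from $e^{\eta\psi_\varepsilon}\le 1$ and $\|v\|_{L^\infty}\le C\|v\|_{H^2(\Omega)}\le C$. You kept only one of the two factors of $\varepsilon^{3}$, arrived at the fictitious ``dangerous'' bound $C\varepsilon^{-2}|v||h|$, and then patched it with a false premise: the coefficient $e^{\psi_\varepsilon}(1-2e^{\psi_\varepsilon})$ is \emph{not} exponentially small on most of $B_{2\delta}(0)$ --- by \eqref{to1} it is $\psi_\varepsilon$ that is small away from the rescaled vortices, so this coefficient tends to $-1$ there. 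The smallness in (III) comes entirely from the difference of the two exponentials, not from any decay of the coefficient, and the ball-splitting/H\"older detour should be deleted.

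For (II), your high-level idea (rescale so that $\varepsilon^{2}DF_\varepsilon(0)$ becomes the operator $L$ and invoke \eqref{1.9}) is the right one, but the assertion that ``the periodicity of $\Omega$ versus $\mathbb R^2$ and the annulus error contribute only exponentially small corrections'' skips the actual content of the proof. A function $h\in H^2(\Omega)$ does not rescale to an element of $H^2(\mathbb R^2)$; to apply \eqref{1.9} one must first cut $h$ off near the origin, and the resulting commutator terms $2\nabla\eta\cdot\nabla h+h\Delta\eta$ are controlled only by $\|h\|_{H^1}$ on an annulus, which is a priori $O(1)$, not exponentially small. The paper handles this by contradiction: assuming $\|h_n\|_{H^2(\Omega)}=1$ and $\|DF_{\varepsilon_n}(0)h_n\|_{L^2(\Omega)}=o(1)$, it exploits the strong zeroth-order coercivity of the operator away from the core (there $e^{\eta\psi_\varepsilon}(1-2e^{\eta\psi_\varepsilon})\approx -1$, so the operator is essentially $\Delta-\varepsilon^{-2}$) together with two nested cut-offs, \eqref{4.5}--\eqref{4.8}, to bootstrap $\|h_n\|_{H^2(B_\delta^c(0))}=o(1)$, and only then rescales the remaining concentrated piece and applies \eqref{1.9}. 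This localization/bootstrap is the main work of the lemma and is missing from your proposal. Your Fredholm route to surjectivity (index zero plus injectivity) is a legitimate alternative to the paper's self-adjointness remark.
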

\begin{proof}
By definition of $F_\varepsilon(v)$, we have
\begin{equation*}
F_\varepsilon(0)=\frac 1{\varepsilon^5}(e^{\eta\psi_\varepsilon}(1-e^{\eta\psi_\varepsilon})-\eta e^{\psi_\varepsilon}(1-e^{\psi_\varepsilon}))+\frac 1{\varepsilon^3}(2\nabla\eta\cdot\nabla\psi_\varepsilon+\psi_\varepsilon\Delta\eta).
\end{equation*}
Since the support of $\nabla \eta$, $\Delta \eta$ is contained in $B_{2\delta}(0)\backslash B_\delta(0)$ and $\psi_\varepsilon(x),\nabla\psi_\varepsilon$ decay to $0$ exponentially fast as $e^{-c/\varepsilon}$ for $|x|\geq \delta$ by \eqref{to1}, we have
\begin{equation*}
\left|\frac 1{\varepsilon^3}(2\nabla\eta\cdot\nabla\psi_\varepsilon+\psi_\varepsilon\Delta\eta)\right|\leq c_1e^{-c_2/\varepsilon}.
\end{equation*}
Since $\eta\equiv 1$ in $B_\delta(0)$, the first term of $F_\varepsilon(0)$ vanishes in $B_\delta(0)$ and again by the exponential decay property of $\psi_\varepsilon$ in $|x|\geq \delta$, one get $\|F_\varepsilon(0)\|_{L^\infty(\Omega)}\leq c_1e^{-c_2/\varepsilon}$ which implies (I) is true.
\par We prove (II) by contradiction. Suppose there exists $h_n\in H^2(\Omega)$ such that
\begin{equation*}
\|h_n\|_{H^2(\Omega)}=1,\quad  \|DF_{\varepsilon_n}(0)h_n\|_{L^2(\Omega)}=o(1).
 \end{equation*}Set $\eta_1(x)=\eta(4x)$ and $\tilde h_n=(1-\eta_1)h_n$. Then $\tilde h_n$ solves
\begin{equation}
\label{4.5}
\Delta \tilde h_n-\frac 1{\varepsilon_n^2}e^{\eta\psi_\varepsilon}(2e^{\eta\psi_\varepsilon}-1)\tilde h_n=(1-\eta_1)DF_{\varepsilon_n}(0)h_n-2\nabla\eta_1\cdot\nabla h_n-\Delta \eta_1 h_n.
\end{equation}
Multiplying  \eqref{4.5} with  $\tilde h_n$ and integrating by parts, we get
\begin{equation}
\label{4.6}
\|\nabla \tilde h_n\|^2_{L^2(\Omega)}+\frac 1{\varepsilon_n^2}\|\tilde h_n\|^2_{L^2(\Omega)}\leq C\|\tilde h_n\|_{L^2(\Omega)}(\|h_n\|_{H^1(B^c_{\delta/4}(0))}+\|DF_{\varepsilon_n}(0)h_n\|_{L^2(B^c_{\delta/4}(0))})
\end{equation}
for some constant $C>0$. From  \eqref{4.6} and $\|h_n\|_{H^1(\Omega)}\leq 1$, one has $\|\tilde h_n\|_{L^2(\Omega)}\leq C\varepsilon_n^2$. Since $\eta_1\equiv 1$ in $B^c_{\delta/2}(0)$, \eqref{4.6} implies
\begin{equation}
\label{4.7}
\begin{split}
\|\nabla h_n\|_{L^2(B^c_{\delta/2}(0))}+\frac 1{\varepsilon_n}\|h_n\|_{L^2(B^c_{\delta/2}(0))}&\leq \|\nabla \tilde h_n\|_{L^2(\Omega)}+\frac 1{\varepsilon_n}\|\tilde h_n\|_{L^2(\Omega)}\leq C\varepsilon_n.
\end{split}
\end{equation}
Let $\eta_2(x)=\eta(2x)$ and $\bar h_n=(1-\eta_2)h_n$. Again, by the same arguments as $\tilde h_n$, we obtain
\begin{equation}
\label{4.8}
\|\nabla \bar h_n\|_{L^2(\Omega)}+\frac 1{\varepsilon_n}\|\bar h_n\|_{L^2(\Omega)}\leq C\varepsilon_n(\|h_n\|_{H^1(B^c_{\delta/2}(0))}+\|DF_{\varepsilon_n}(0)h_n\|_{L^2(B^c_{\delta/2}(0))})=o(\varepsilon_n).
\end{equation}
By \eqref{4.8}, it follows that $\|\bar h_n\|_{L^2(\Omega)}=o(\varepsilon_n^2)$ which implies $\|\Delta \bar h_n\|_{L^2(\Omega)}=o(1)$. Then by $W^{2,2}$ estimates and $1-\eta_2\equiv 1$ in $B^c_{\delta}(0)$, we have $\|h_n\|_{H^2(B^c_{\delta}(0))}=o(1)$ which implies $\|h_n\|_{H^2(B_\delta(0))}=1+o(1)$.
\par  Set $\hat h_n(x)=\eta(\varepsilon_n x) h_n(\varepsilon_n x)$ and $\eta_n(x)=\eta(\varepsilon_n x)$. By a direct computation, $\hat h_n$ satisfies
\begin{equation}
\label{4.9}
\begin{split}
\Delta \hat h_n+e^{\psi}(1-2e^{\psi})\hat h_n=&2\nabla\hat h_n\cdot \nabla \eta_n+\hat h_n\Delta \eta_n+\varepsilon_n^2\eta_n[DF_{\varepsilon_n}(0)h_n](\varepsilon_n x)\\
&+[e^{\eta_n\psi}(2e^{\eta_n\psi}-1)-e^{\psi}(2e^{\psi}-1)]\hat h_n.
\end{split}
\end{equation}
Set $\hat \Sigma_0=\{\frac \delta{\varepsilon_n}\leq |x|\leq \frac {2\delta}{\varepsilon_n}\}$, $\Sigma_0=\{\delta\leq |x|\leq 2\delta\}$. Since the last term in \eqref{4.9} vanishes in $\hat\Sigma_0^c$, then by \eqref{4.8} and our assumption,
\begin{equation*}
\begin{split}
\|\hat h_n\|_{H^2(\mathbb R^2)}&\leq C(\varepsilon_n\|\nabla \hat h_n\|_{L^2(\hat \Sigma_0)}+\|\hat h_n\|_{L^2(\hat \Sigma_0)}+\varepsilon_n\|DF_{\varepsilon_n}(0)h_n\|_{L^2(B_{2\delta}(0))})\\
&\leq C(\varepsilon_n\|\nabla h_n\|_{L^2(\Sigma_0)}+\varepsilon_n^{-1}\|h_n\|_{L^2(\Sigma_0)}+\varepsilon_n\|DF_{\varepsilon_n}(0)h_n\|_{L^2(B_{2\delta}(0))})=o(\varepsilon_n).
\end{split}
\end{equation*}
This contradicts to $\|h_n\|_{H^2(B_\delta(0))}=1+o(1)$. The second part of (II) follows immediately. Since $DF_{\varepsilon}(0)$ is a self-adjoint operator from $H^2(\Omega)\rightarrow L^2(\Omega)$, we obtain that $DF_\varepsilon$ is an isomorphism from $H^2(\Omega)$ to $L^2(\Omega)$.

\par The estimate for $\|DF_\varepsilon(v)h-DF_\varepsilon(0)h\|_{L^2(\Omega)}$ follows from
\begin{equation*}
DF_\varepsilon(v)h-DF_\varepsilon(0)h=\frac 1{\varepsilon^2}e^{\eta\psi_\varepsilon}(e^{\varepsilon^3 v}-1)h-\frac 2{\varepsilon^2}e^{2\eta\psi_\varepsilon}(e^{2\varepsilon^3 v}-1)h
\end{equation*}
and the embedding of $H^2(\Omega)\hookrightarrow L^\infty(\Omega)$.
\end{proof}
\textbf{Proof of Theorem \ref{thm1.3}. }
We define a functional $G_\varepsilon: H^2(\Omega)\rightarrow L^2(\Omega)$ by
\begin{equation}\label{4.10}
G_\varepsilon(v)=v-[DF_\varepsilon(0)]^{-1}F_\varepsilon(v).
\end{equation}
For any fixed point $v_\varepsilon$ of $G_\varepsilon$ in $H^2(\Omega)$, $u_\varepsilon=\eta\psi_\varepsilon+\varepsilon^3 v_\varepsilon$ is a solution of \eqref{1.13}. It suffices to prove $G_\varepsilon$ admits a fixed point in $H^2(\Omega)$ for $\varepsilon>0$ small. Set $\mathcal B=\{v\in H^2(\Omega)||v|_{H^2(\Omega)}\leq 1\}$.
\par Claim: $G_\varepsilon:\mathcal B\rightarrow \mathcal B$ is a well-defined contraction mapping provided $\varepsilon>0$ is small enough.
\par It follows from Lemma \ref{lem4.1} that, for $\varepsilon>0$ small enough,
\begin{equation*}
\begin{split}
\|DG_\varepsilon(v)h\|_{L^2(\Omega)}\leq \|[DF_\varepsilon(0)]^{-1}\|\|(DF_\varepsilon(v)-DF_\varepsilon(0))h\|_{L^2(\Omega)}\leq C\varepsilon|h|_{H^2(\Omega)}
\end{split}
\end{equation*}
for $v\in\mathcal B$. Then by Lemma \ref{lem4.1}, we have
\begin{equation*}
\|G_\varepsilon(0)\|_{H^2(\Omega)}\leq C\|F_\varepsilon(0)\|_{L^2(\Omega)}\leq Ce^{-c/\varepsilon}.
\end{equation*}
For any $v_1,v_2\in\mathcal B$, for $\varepsilon>0$ small enough, we have
\begin{equation*}
\begin{split}
\|G_\varepsilon(v_1)\|_{H^2(\Omega)}&\leq \|G_\varepsilon(0)\|_{H^2(\Omega)}+\|G_\varepsilon(v_1)-G_\varepsilon(0)\|_{H^2(\Omega)}\\
&\leq \|G_\varepsilon(0)\|_{H^2(\Omega)}+(\sup_{v\in \mathcal B}\|DG_\varepsilon(v)\|)\|v_1\|_{H^2(\Omega)}\leq C(e^{-c/\varepsilon}+\varepsilon),
\end{split}
\end{equation*}
and
\begin{equation*}
\|G_\varepsilon(v_1)-G_\varepsilon(v_2)\|_{H^2(\Omega)}\leq (\sup_{v\in \mathcal B}\|DG_\varepsilon(v)\|)\|v_1-v_2\|_{H^2(\Omega)}\leq C\varepsilon\|v_1-v_2\|_{H^2(\Omega)}.
\end{equation*}
Hence, if $\varepsilon>0$ is small enough, $G_\varepsilon:\mathcal B\rightarrow \mathcal B$ is a well-defined contraction mapping, which implies $G_\varepsilon$ has a unique fixed point $v_\varepsilon\in \mathcal B$. By a direct computation, one can see that $u_\varepsilon=\eta\psi_\varepsilon+\varepsilon^3 v_\varepsilon$ satisfies \eqref{1.13}. Also from Sobolev embedding $H^2(\Omega)\hookrightarrow L^\infty(\Omega)$ and  $\psi_\varepsilon$ decaying exponentially to $0$, $\forall |x|\geq r$, $\forall r>0$, we see $u_\varepsilon$ is a topological solution.
\section*{Acknowledgement}
The first author would like to thank  Center for Advanced Study in Theoretical Sciences, National Taiwan University for the warm hospitality when this work was done. The work of the first author is partially supported by NSFC-11401376 and China Postdoctoral Science Foundation 2014M551391.

\providecommand{\bysame}{\leavevmode\hbox to3em{\hrulefill}\thinspace}
\providecommand{\MR}{\relax\ifhmode\unskip\space\fi MR }
\providecommand{\MRhref}[2]{%
  \href{http://www.ams.org/mathscinet-getitem?mr=#1}{#2}
}
\providecommand{\href}[2]{#2}

\end{document}